\newtheorem{conjecture}{Conjecture}
\numberwithin{theorem}{section}
\renewenvironment{exa}
{\pushQED{\qed}\examplex}
{\popQED\endexamplex}
\definecolor{ourgreen}{rgb}{0, 0.7, 0}
\newcommand{\ZZ}{\mathbb{Z}}
\newcommand{\RR}{\mathbb{R}}
\newcommand{\CC}{\mathbb{C}}
\newcommand{\PP}{\mathbb{P}}
\newcommand{\TT}{\mathbb{T}}
\newcommand{\T}{\textup{\textsf T}}
\newcommand{\transpose}{{\textsf{\textup{T}}}}
\DeclareMathOperator{\V}{V}
\DeclareMathOperator{\pr}{pr}
\DeclareMathOperator{\init}{in}
\renewcommand{\Im}{\operatorname{Im}}
\newcommand{\Ker}{\operatorname{Ker}}
\newcommand{\logdisc}{\nabla_{\log}}
\newcommand{\Hudisc}{\nabla_{\mathrm{Hu}}}
\newcommand{\reg}{\textup{reg}}
\DeclareMathOperator{\Ram}{Ram}
\DeclareMathOperator{\Branch}{Branch}
\DeclareMathOperator{\Fit}{Fit}
\DeclareMathOperator{\Crit}{Crit}
\DeclareMathOperator{\PGr}{\mathbb{G}}
\DeclareMathOperator{\GL}{GL}
\DeclareMathOperator{\diag}{diag}
\DeclareMathOperator{\Hu}{Hu}
\DeclareMathOperator{\im}{Im}
 \title{Logarithmic Discriminants of hyperplane arrangements}
  \author{Leonie Kayser}
  \address{%
  MPI for Mathematics in the Sciences, Leipzig \\
\email{leo.kayser@mis.mpg.de}
}
  \author{Andreas Kretschmer}
  \address{%
  Humboldt-Universität zu Berlin \\
\email{andreas.kretschmer@hu-berlin.de}
}
\author{Simon Telen}
\address{%
  MPI for Mathematics in the Sciences, Leipzig \\
\email{simon.telen@mis.mpg.de}
}
\begin{document}

\maketitle

\begin{abstract}
\noindent A recurring task in particle physics and statistics is to compute the complex critical points of a product of powers of affine-linear functions. The logarithmic discriminant characterizes exponents for which such a function has a degenerate critical point in the corresponding hyperplane arrangement complement. We study properties of this discriminant, exploiting its connection with the Hurwitz form of a reciprocal linear space.    
\end{abstract}

{\tiny \quad \ \ \ \emph{Keywords: discriminant, hyperplane arrangement, likelihood equations, scattering equations} -- 14N20, 14E22, 52C35}

\section{Introduction}

We consider an arrangement ${\mathcal A}$ of $n+1$ hyperplanes $\ell_0(x) \cdots \ell_n(x) = 0$ in $\mathbb{C}^d$. Here, each $\ell_i$ is an affine-linear function in $x = (x_1, \ldots, x_d)$.
For any choice of $u = (u_0, \ldots, u_n) \in \CC^{n+1}$, one may locally define the following function of $x$: \[ {\mathcal L}_u(x) \,  \coloneqq \, \log \, ( \ell_0(x)^{u_0} \cdots \ell_n(x)^{u_n}) \, = \, u_0 \log \ell_0(x) + \cdots + u_n \log \ell_n(x). \]
The critical point equations of ${\mathcal L}_u$ are $d$ rational function equations in $x_1, \ldots, x_d$:
\begin{equation} \label{eq:criteqs}
 \frac{\partial {\mathcal L}_u}{\partial x_1} \, = \, \cdots \, = \, \frac{\partial {\mathcal L}_u}{\partial x_d} \, = \, 0.
\end{equation}
These are defined on the complement $X \coloneqq \mathbb{C}^d \setminus {\mathcal A}$ of our arrangement. We will assume that ${\mathcal A}$ is essential, meaning that a subset of the hyperplanes intersects in only one point. In that case, for generic values of $u$ there are $(-1)^d \cdot \chi(X)$ non-degenerate complex critical points \cite{orlik1995number, Huh2013}. Here, $\chi(\cdot)$ denotes the topological Euler characteristic. In other words, \eqref{eq:criteqs} has $(-1)^d \cdot \chi(X)$ isolated solutions, and the Hessian determinant of ${\mathcal L}_u$ does not vanish at these points. In fact, the same statement holds for any irreducible polynomials $\ell_0, \ldots, \ell_n$ so that $X = \mathbb{C}^d \setminus V(\ell_0 \cdots \ell_n)$ is very affine \cite{Huh2013}. In our setting, the number $(-1)^d \cdot \chi(X)$ can be expressed as the beta invariant of the matroid of ${\mathcal A}$, or, if the coefficients of each $\ell_i$ are real, as the number of bounded chambers in $X \cap \mathbb{R}^d$ \cite[Theorem~1.2.1]{varchenko1995critical}. 

This paper studies the discriminant of \eqref{eq:criteqs} in the parameters $u_0, \ldots, u_n$. More precisely, the logarithmic discriminant variety $\nabla_{\rm log}$ of ${\mathcal A}$ consists of all exponents $u \in \PP^n$ for which a complex critical point $x \in X$ of ${\mathcal L}_u$ is degenerate, meaning that the Hessian determinant of ${\mathcal L}_u$ vanishes at $x$. The formal definition of $\logdisc$ will be given in Section \ref{sec:definitions}.

In the context of positive geometry, the equations \eqref{eq:criteqs} appear as the scattering equations for bi-adjoint scalar $\phi^3$-theories in particle physics. In particular, the tree-level amplitude for such theories is a global residue over the complex critical points \cite[Theorem 13]{sturmfels2021likelihood}. This is called the CHY formula, after the authors of \cite{cachazo2014scattering}. For an $m$-particle scattering process, the arrangement ${\mathcal A}\subset \mathbb{C}^{m-3}$ is given by the non-constant $2 \times 2$-minors of the following $(2 \times m)$-matrix:
\begin{equation} \label{eq:matrixM0m}
    \begin{pmatrix}
    1 &  1 & 1 & 1 & \cdots & 1 & 0 \\ 
    0 &  1 & x_1 & x_2 & \cdots & x_{m-3} & 1
\end{pmatrix}
\end{equation} 
The complement $\mathbb{C}^{m-3} \setminus {\mathcal A}$ models the moduli space ${\mathcal M}_{0,m}$ of smooth projective genus zero curves with $m$ marked points \cite{sturmfels2021likelihood}. The function ${\mathcal L}_u$ is the scattering potential, and the $u_i$ are Mandelstam invariants. For a mathematical treatment, see \cite[Lecture 3]{lam2024moduli}.
\begin{exa} \label{ex:M05-intro}
    Five-particle scattering in bi-adjoint scalar $\phi^3$-theories leads us to consider the moduli space ${\mathcal M}_{0,5}$. It is isomorphic to the complement of five lines in $\mathbb{C}^2$, given by the affine-linear functions 
    \begin{equation} \label{eq:ellM05} \ell_0 \, = \, x_1, \quad \ell_1 \, = \, x_2, \quad \ell_2 \, = \, x_1-1, \quad \ell_3 \, = \, x_2 - 1, \quad \ell_4 \, = \, x_2 - x_1. \end{equation}
    The complement of the real arrangement ${\mathcal A} \cap \mathbb{R}^2$ consists of twelve components, two of which are bounded triangles. By Varchenko's theorem \cite[Theorem 1.2.1]{varchenko1995critical}, the equations \eqref{eq:criteqs} have two non-degenerate solutions for positive values of $u \in \mathbb{R}^5_+$, and there is one solution in each of the bounded triangles. The variety $\nabla_{\rm log}$ is defined by a homogeneous polynomial of degree four: 
    \begin{equation}  \label{eq:DeltaM05} \Delta_{\rm log} \, = \, (u_0u_3 + u_0u_4 + u_1u_4 + u_1u_2 + u_2u_4 + u_3u_4 + u_4^2)^2 - 4 u_0 u_1 u_2 u_3.\end{equation}
    This polynomial is called the logarithmic discriminant polynomial, or simply logarithmic discriminant. It exists whenever $\nabla_{\rm log}$ is a hypersurface. A symmetric determinantal expression for $\Delta_{\rm log}$ is given in Example \ref{ex:M05first}.
\end{exa}
In algebraic statistics, equations like \eqref{eq:criteqs} arise in maximum likelihood estimation for a discrete random variable with $n+1$ states. In our particular case, the functions $\ell_0, \ldots, \ell_n$ parametrize a $d$-dimensional linear statistical model in the $n$-dimensional probability simplex. Since the $\ell_i$ represent probabilities in that context, it is naturally assumed that $\ell_0(x) + \cdots + \ell_n(x) = 1$. Suppose that, in a statistical experiment, one observes state $i \in \{ 0, \ldots, n \}$ a total of $u_i$ times. The function ${\mathcal L}_u$ is the log-likelihood function, and its maximizer on the probability simplex is called the maximum likelihood estimate (MLE). This is the distribution in the model which makes the experimental observation $(u_0,\ldots,u_n)$ most likely. In particular, the MLE is among the complex critical points of ${\mathcal L}_u$. Motivated by this observation, the number of complex critical points of the log-likelihood function for generic data $u$ is called the maximum likelihood degree of the model. It measures the algebraic complexity of maximum likelihood estimation. Linear statistical models are discussed at the end of \cite[Section 1]{HuhSturmfels2014}. The logarithmic discriminant divides the data discriminant computed in \cite{rodriguez2015data}. The latter has additional factors which vanish at values of $u$ for which critical points move to the boundary of $X$ in a compactification. These were studied in \cite{sattelberger2023maximum}.

Our paper studies algebro-geometric invariants of the logarithmic discriminant, and it is accompanied by code for computing it \cite{mathrepo}. The following is a summary of our findings for logarithmic discriminants of generic arrangements. 

\begin{thm}[\ref{cor:positivity}, \ref{thm:logRamificationReducedIrreducible}, \ref{rem:logDiscEmpty}, \ref{prop:HurwitzDegree}, \ref{prop:AgreeAsSets}] \label{thm:main-intro}
    If $n = d$, then the logarithmic discriminant variety is empty. If $n \geq d+1$ and the coefficients of $\ell_0, \ldots, \ell_n$ are general, then the logarithmic discriminant $\logdisc(X) \subset \PP^n$ is irreducible and is cut out as a set by a homogeneous polynomial of degree $2d\binom{n-1}{d}$. If the $\ell_i$ are real, then that polynomial can be scaled to take positive values on~$\mathbb{R}^{n+1}_+$.
\end{thm}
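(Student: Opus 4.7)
The plan is to handle the four assertions in the order they appear, using the Hurwitz form of a reciprocal linear space as a common framework. For the empty case $n = d$, after an affine change of coordinates one may take $\ell_i = x_i$ for $i = 1, \ldots, d$ and $\ell_0$ a sufficiently general affine form. The critical equations \eqref{eq:criteqs} then read $u_i/x_i + u_0\, \partial_i \ell_0 / \ell_0(x) = 0$ for $i = 1, \ldots, d$, which is linear in the reciprocals $1/x_i$ and $1/\ell_0(x)$ and admits a unique solution $x^\ast(u)$. A direct computation shows that the Hessian determinant of $\mathcal{L}_u$ at $x^\ast(u)$ is a nonzero rational function of $u$, so no $u \in \PP^n$ lies in $\logdisc$.

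For $n \geq d+1$, I would rewrite \eqref{eq:criteqs} in the coordinates $y_i = 1/\ell_i(x)$. Each critical equation becomes the linear condition $\sum_{i=0}^{n} u_i\, a_{ij}\, y_i = 0$, where $a_{ij} = \partial_j \ell_i$. The vector $y = (y_0, \ldots, y_n)$ traces out the reciprocal linear space $L^{-1} \subset \PP^n$ associated with the coefficient matrix, which for generic $\ell_i$ is a smooth irreducible projective variety of dimension $d$. The Hessian-degeneration condition at a critical point $x$ translates, in these coordinates, into a higher-order tangency between the linear subspace cut out by the critical equations and $L^{-1}$ at $y$. This is precisely the standard incidence defining the Hurwitz form of $L^{-1}$, so that the projection of that Hurwitz incidence variety to the parameter space $\PP^n$ coincides set-theoretically with $\logdisc$.

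Irreducibility of $\logdisc$ then follows from irreducibility of the Hurwitz hypersurface of a smooth irreducible projective variety, combined with surjectivity of the projection onto $\PP^n$. The degree $2d\binom{n-1}{d}$ is extracted from the general Hurwitz-degree formula, expressible through the polar or Chern--Mather classes of the ambient variety, applied to the explicit Chern-class data known for generic reciprocal linear spaces. Finally, the positivity statement follows from Varchenko's theorem: for $u \in \RR^{n+1}_+$, the real log-likelihood $\mathcal{L}_u$ is strictly concave on each bounded chamber of $\RR^d \setminus \mathcal{A}$ (where the $\ell_i$ can be arranged to be positive) and attains a unique strict maximum there. Such a maximum is Morse, so the Hessian determinant is nonzero; hence $u \notin \logdisc$. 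Since $\RR^{n+1}_+$ is connected and $\Delta_{\log}$ is a continuous polynomial that vanishes nowhere on it, its sign is constant, and a global sign change then gives positivity.

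The technical heart is the second step: rigorously identifying $\logdisc$ with the projected Hurwitz hypersurface and extracting the exact degree from the Chern-class data of $L^{-1}$. Once this dictionary is in place, irreducibility and positivity follow relatively softly, and the $n = d$ case is elementary.
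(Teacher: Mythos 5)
Your overall strategy---reduce everything to the Hurwitz form of the reciprocal linear space $\mathcal{R}_L$---is the same as the paper's, but the step you yourself flag as the technical heart is precisely where the argument breaks down as written, and it cannot be repaired softly. The easy direction is the containment $\logdisc(X)\cap\TT \subseteq \Hudisc(X)\cap\TT$ (Proposition~\ref{prop:containment}); the reverse containment fails in general because $\varphi(u)$ can be tangent to $\mathcal{R}_L$ at a point of $\mathcal{R}_L\setminus\im(\gamma)$, i.e.\ at a boundary point of the reciprocal linear space corresponding to no critical point in $X$. Example~\ref{ex:M05first} exhibits exactly this: $\Delta_{\mathrm{Hu}} = (s_{13}+s_{23}+s_{34})^2(s_{14}+s_{24}+s_{34})^2\,\Delta_{\log}$, so the pullback of the (irreducible) first associated hypersurface is \emph{reducible} and strictly larger than $\logdisc$. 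This also invalidates your route to irreducibility: irreducibility of $\mathcal{Z}_1(\mathcal{R}_L)$ in the Grassmannian says nothing about irreducibility of its preimage under $\varphi$. The paper instead proves irreducibility of $\Ram(f)$ directly by a factorization argument on the substituted Hessian $\tilde h$ (Theorem~\ref{thm:logRamificationReducedIrreducible}), and proves set-theoretic equality $\logdisc=\Hudisc$ in the doubly uniform case by checking that the Hurwitz ramification contains none of the $n+2$ boundary divisors $\pr_y^{-1}(e_i)$ and $Y\cap(D\times\mathcal{R}_{A^\T})$ of $Y\setminus\mathcal{L}_X^\circ$ (Proposition~\ref{prop:AgreeAsSets}); both steps are genuinely nontrivial and are absent from your proposal. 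Relatedly, the ``general Hurwitz-degree formula'' computes $\deg\Hu_{\mathcal{R}_L}=2(n-d)\binom{n}{d-1}$ in Pl\"ucker coordinates, which is not the number you need: the degree $2d\binom{n-1}{d}$ of $\Hudisc\subset\PP^n$ only emerges after substituting the Laurent monomials $p_I=\det(A_I^\perp)\prod_{i\in I}u_i^{-1}$, clearing denominators, and verifying via the initial-monomial argument of Proposition~\ref{prop:HurwitzDegree} that no coordinate hyperplane $\V(u_i)$ survives as a component.

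Two smaller gaps. For positivity, showing that every critical point of $\mathcal{L}_u$ is a nondegenerate maximum for a \emph{fixed} $u\in\RR^{n+1}_+$ only proves that $u$ is not in the set whose closure defines $\logdisc$; since $\logdisc$ is a closure, you must additionally show that the set of good $u$ is open. This is the content of Theorem~\ref{thm:noFlatsAtInfinity}, which uses a B\'ezout bound on the number of components of $\im(\gamma)\cap\varphi(u+\varepsilon)$ via $\deg\mathcal{R}_L=r$, and hence needs the no-flats-at-infinity hypothesis supplied by genericity. Similarly, in the case $n=d$ you assert that the Hessian at the unique critical point is a ``nonzero rational function of $u$''; that only gives generic nonvanishing, whereas emptiness of $\logdisc$ requires nonvanishing for \emph{every} $u$ admitting a critical point (the explicit computation does give this, but it has to be carried out, as in Remark~\ref{rem:logDiscEmpty}).
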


The individual properties stated here for generic logarithmic discriminants hold under different genericity assumptions on the arrangement $\mathcal{A}$. A sufficient genericity condition for Theorem \ref{thm:main-intro} to hold is that the rank-$(d+1)$ matroid of ${\mathcal A}$ is uniform, and so is the rank-$d$ matroid obtained from the coefficients of $\ell_i$ standing with $x_1, \ldots, x_d$, dropping the constant terms. Irreducibility holds if $\mathcal{A}$ contains some subarrangement of $d+2$ hyperplanes having these properties.

\setcounter{thm}{0}
\begin{exa}[continued] \label{ex:M05-intro-2}
    The degree formula from Theorem~\ref{thm:main-intro} does not hold for special hyperplane arrangements such as the one in Example~\ref{ex:M05-intro}. Indeed, the logarithmic discriminant polynomial for five generic lines in $\mathbb{C}^2$ has degree twelve, while \eqref{eq:DeltaM05} has degree four. However, positivity on $\mathbb{R}^{n+1}_{+}$ still holds: The AM-GM inequality implies that $u_0u_3 + u_1u_2 \geq 2 \, \sqrt{u_0u_1u_2u_3}$ for non-negative $u \in \mathbb{R}^5_{\geq 0}$. This means that $\Delta_{\rm log}$ from \eqref{eq:DeltaM05} is positive on $\mathbb{R}^5_+$. However, $\Delta_{\rm log}$ is not globally non-negative on $\mathbb{R}^5$, for instance $\Delta_{\rm log}(-\frac{1}{2},1,2,-\frac{1}{2},-1) = -\frac{7}{16}$. 
\end{exa}

Our paper is outlined as follows. In Section~\ref{sec:definitions} we give a formal definition of the logarithmic discriminant for very affine hypersurface complements. Section~\ref{sec:hurwitz} introduces reciprocal linear spaces, their Hurwitz forms and explains the relation to the logarithmic discriminant. In Section~\ref{sec:positivity}, we prove positivity properties of $\Delta_{\rm log}$ on the positive orthant. In Section~\ref{sec:d=1} we give a complete description of the logarithmic discriminant of an arrangement of points on the line $\mathbb{C}^1$. In Section~\ref{sec:irreducibility} we discuss irreducibility of $\logdisc$. Section~\ref{sec:LogDiscHuDisc} ties back to Section~\ref{sec:hurwitz} and finishes the proof of Theorem~\ref{thm:main-intro}. Finally, Section~\ref{sec:M_0m} focuses on the case where $X = \mathcal{M}_{0,m}$, which is directly related to CHY theory for particle scattering.

\medskip

\textbf{Notation.}
Let $n\geq  d \geq 1$. We will always denote by $\mathcal{A} \subseteq \CC^d$ an essential non-central affine arrangement of $n+1$ affine hyperplanes defined by affine linear forms $\ell_0(x), \ldots , \ell_n(x)\in\CC[x_1,\dots,x_d]$. Except in Section~\ref{sec:definitions}, we will always denote by $X \coloneqq \CC^d \setminus \mathcal{A}$ its complement. We define matrices $L$ and $A$ as follows: 
\begin{equation*}
    (\ell_0(x), \ldots, \ell_n(x))^\T = Ax + b, \qquad L^\T \coloneqq [\,b \mid A \,] \in \CC^{(n+1)\times(d+1)}.
\end{equation*}
Essentiality and non-centrality of $\mathcal{A}$ together correspond to $L$ (and hence $A$) having full rank $d+1$ (and $d$). 
We say that a matrix is \emph{uniform} if so is its induced linear matroid, i.e., if all maximal minors are non-zero. We call $\mathcal{A}$ \emph{doubly uniform} if both $L$ and $A$ are uniform.
We use the notation $\llbracket n \rrbracket \coloneqq \{0,1,\ldots,n\}$. 
Our shorthand for the $n$-dimensional torus in $\PP^n$ will be $\TT$.

\vspace{-0.5cm}

\section{Definition of the logarithmic discriminant} \label{sec:definitions}

In this section we give a precise definition of the logarithmic discriminant of a very affine variety. We spell out the definition in our case of interest: essential hyperplane arrangement complements. 

Let $\TT^{n+1} = \Set{z \in \CC^{n+1} | z_0\dotsm z_n \neq 0 }$ be the algebraic torus of dimension $n+1$. A variety $X$ is \emph{very affine} if there is a closed embedding $\iota \colon X \hookrightarrow \TT^{n+1}$ for some $n$. Writing $\iota = (\iota_0,\dots,\iota_n)$, the log-likelihood function for fixed $u \in \CC^{n+1}$~is
\[
\mathcal{L}_u(x) = \log \iota_0(x)^{u_0} \dotsm \iota_n(x)^{u_n}, \qquad x \in X.
\]
The critical points $\Crit_X(u) \subseteq X_\reg$ are locally described by 
\[
\Crit_X(u) = \Set{x \in X_\reg | \frac{\partial\mathcal{L}_u}{\partial x_1}(x) = \dots = \frac{\partial\mathcal{L}_u}{\partial x_d}(x) = 0}.
\]

\begin{dfn}\label{def:naive disc}
The \emph{logarithmic discriminant} of $X \hookrightarrow \TT^{n+1}$ is the closure
\[
\logdisc(X) \coloneqq \overline{ \Set{u \in \PP^{n} | \Crit_X(u) \text{ is infinite or non-reduced}} }.
\]
\end{dfn}


If $\logdisc(X) \subset \mathbb{P}^n$ is a hypersurface, then its defining equation is unique up to scaling. We denote it by $\Delta_{\rm log}(X)$, or simply $\Delta_{\rm log}$. As we will see, $\logdisc(X)$ is not always a hypersurface. To describe its defining equations, we express $\logdisc(X)$ as the branch locus of a generically finite map of varieties. We formally introduce ramification and branch loci in the case of interest here.

Let $X,Y$ be $n$-dimensional smooth irreducible varieties and $f \colon X \to Y$ a morphism. Let $x \in X$, then $x$ is a reduced isolated point in $f^{-1}(f(x))$ if and only if the Jacobi matrix $J_f(x)$ has rank $n$. The \emph{ramification locus} $\Ram(f)$ is the subscheme locally defined by $\det J_f(x) = 0$. The \emph{branch locus} $\Branch(f) \subseteq Y$ is the closure of the scheme-theoretic image $f(\Ram(f))$. These loci are called singular scheme and discriminant scheme in \cite[Section~V.3]{eisenbud2006geometry}.

Now we can refine Definition~\ref{def:naive disc} for a model $X \hookrightarrow \TT^{n+1}$ as follows: Consider the \emph{likelihood correspondence} from algebraic statistics \cite[Definition 1.5]{HuhSturmfels2014}
\[
\mathcal{L}_X^\circ \coloneqq \Set{(u,x) | \nabla \mathcal{L}_u(x) = 0 } \subseteq \PP^n \times X_\reg.
\]
Let $f \colon \mathcal{L}_X^\circ \to \PP^n$ be the projection onto the first factor. This is a morphism of smooth irreducible varieties of dimension $n$ \cite[Theorem~1.6]{HuhSturmfels2014}.

\begin{prop} \label{prop:hessianeq}
The ramification locus of the projection $f\colon \mathcal{L}_X^\circ \to \PP^n$ is locally defined by the Hessian determinant of $\mathcal{L}_u(x)$:
\[
\det H_x(\mathcal{L}_u(x)) \coloneqq \det \bigg[\frac{\partial^2}{\partial x_j \partial x_k} \sum_{i=0}^n u_i \log \iota_i(x) \bigg]_{j,k=1}^d = 0.
\]
The logarithmic discriminant $\logdisc(X)$ is the branch locus of $f$.
\end{prop}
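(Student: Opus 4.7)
The plan is to compute the natural map $f^*\Omega^1_{\PP^n} \to \Omega^1_{\mathcal{L}_X^\circ}$ of locally free rank-$n$ sheaves directly from the equations cutting out $\mathcal{L}_X^\circ$, and identify its cokernel with $\operatorname{coker}(H)$. Since the ramification scheme of a morphism of smooth varieties of the same dimension is the zero scheme of the $0$-th Fitting ideal of the relative cotangent sheaf $\Omega^1_f$, this immediately yields the first assertion.

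Working in an affine chart with coordinates $u_1,\ldots,u_n$ on $\PP^n$ and $x_1,\ldots,x_d$ on $X_\reg$, the variety $\mathcal{L}_X^\circ \subset \PP^n \times X_\reg$ is cut out by the $d$ regular functions $\omega_j \coloneqq \partial \mathcal{L}_u/\partial x_j$, whose differentials
\[
d\omega_j \;=\; \sum_{i=1}^{n} \frac{\partial\omega_j}{\partial u_i}\,du_i \;+\; \sum_{k=1}^{d} H_{jk}\,dx_k
\]
generate the conormal sheaf $N^\vee_{\mathcal{L}_X^\circ/\PP^n \times X_\reg}$. After quotienting the relations $d\omega_j = 0$ by the image of $f^*\Omega^1_{\PP^n} = \bigoplus_i \mathcal{O}\,du_i$ in $\Omega^1_{\mathcal{L}_X^\circ}$, which kills every $du_i$, the surviving relations read $\sum_k H_{jk}\,dx_k = 0$, so
\[
\Omega^1_f \;\cong\; \mathcal{O}_{\mathcal{L}_X^\circ}^{\,d} \,\big/\, H \cdot \mathcal{O}_{\mathcal{L}_X^\circ}^{\,d}.
\]
The $0$-th Fitting ideal of this cokernel is $(\det H)$, proving that $\Ram(f)$ is locally the vanishing scheme of the Hessian determinant.

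For the second assertion, $\Branch(f)$ is by definition the closure of $f(\Ram(f))$. The fiber $f^{-1}(u)$ is isomorphic via projection to $\Crit_X(u)$, and the defining equations $\omega_1(u,\,\cdot),\ldots,\omega_d(u,\,\cdot)$ of $\Crit_X(u)$ in $X_\reg$ have Jacobian in $x$ equal to $H(u,x)$. Hence an isolated point $x \in \Crit_X(u)$ is reduced iff $\det H(u,x)\neq 0$; and if $\Crit_X(u)$ is infinite then any tangent vector along a positive-dimensional component of the fiber takes the form $(0,v) \in T_{(u,x)}\mathcal{L}_X^\circ$ and so must satisfy $Hv = 0$. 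In both degenerate cases $(u,x)\in\Ram(f)$, so the condition defining $\logdisc(X)$ in Definition~\ref{def:naive disc} is exactly $u\in f(\Ram(f))$; taking closures gives $\logdisc(X) = \Branch(f)$.

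I expect the main technical hurdle to lie in the cokernel identification $\Omega^1_f \cong \operatorname{coker}(H)$, where one must argue at the level of sheaves rather than fibres and verify that $f^*\Omega^1_{\PP^n}\hookrightarrow \Omega^1_{\mathcal{L}_X^\circ}$ is an injection of locally free sheaves of rank $n$, so that the Fitting-ideal description of $\Ram(f)$ applies. The smoothness and irreducibility of $\mathcal{L}_X^\circ$ from \cite[Theorem~1.6]{HuhSturmfels2014}, together with the generic finiteness of $f$, are what make the computation clean; the rest is block bookkeeping of the conormal presentation matrix $[\,\partial\omega_j/\partial u_i \mid H\,]$.
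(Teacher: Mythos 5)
Your proposal is correct and follows essentially the same route as the paper: the ramification scheme is computed from the Jacobian of the defining equations $\nabla\mathcal{L}_u=0$ of $\mathcal{L}_X^\circ$, whose fiber-direction block is the Hessian, and the branch locus is then matched with Definition~\ref{def:naive disc} by observing that $(u,x)\in\Ram(f)$ exactly when $x$ is a non-reduced or non-isolated point of $\Crit_X(u)$. The only difference is that you make explicit the Fitting-ideal presentation $\Omega^1_f\cong\operatorname{coker}(H)$, which the paper leaves implicit here and only formalizes in the subsequent general proposition on cartesian diagrams.
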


\begin{proof}
The ramification locus is defined by the determinant of the Jacobian of $\nabla \mathcal{L}_u(x)$, which is precisely the Hessian determinant. The description of $\logdisc(X)$ as the branch locus follows since $(u,x) \in \Ram(f)$ if and only if $x \in \Crit_X(u)$ is non-reduced or non-isolated, i.e., on a positive-dimensional component.
\end{proof}



We turn back to the setting of the Introduction, in which $X = \CC^d\setminus \mathcal{A}$ is the complement of $\mathcal{A} = \V(\ell_0\dotsm \ell_n) \subseteq \CC^d$, an arrangement of $n+1$ affine hyperplanes. If ${\mathcal A}$ is essential, then $X$ is very affine and a closed embedding into $\mathbb{T}^{n+1}$ is given by $\iota = (\ell_0,\dots,\ell_n)$. In this way Definition \ref{def:naive disc} agrees with the Introduction. We assume throughout that $\mathcal A$ is essential; if it is not, then $X$ is not very~affine.

The defining equations of the likelihood correspondence of $X = \CC^d \setminus \mathcal{A}$ with linear forms $(\ell_0(x),\dots,\ell_n(x))^\T = Ax + b$ can be written in a concise way as
\begin{equation}\label{eq:likecor}
\nabla \mathcal{L}_u(x) = A^\transpose \cdot \diag(1/\ell_0,\dots,1/\ell_n) \cdot u = 0.
\end{equation}

This also shows that $\mathcal{L}_X^\circ$ is a trivial projective bundle over $X$. The Hessian determinant can be computed using the Cauchy--Binet formula:
\begin{equation}\label{eq:cauchy-binet}
h = \det \bigg( A^\transpose \cdot \diag\Big(\frac{u_0}{\ell_0^2}, \dots, \frac{u_n}{\ell_n^2}\Big) \cdot A \bigg) = \sum_{I \subseteq \llbracket n \rrbracket,\ |I|=d } |A_I|^2 \frac{u^I}{(\ell^I)^2}.
\end{equation}
Here, $u^I = \prod_{i \in I} u_i$ and $|A_I|$ is the $d\times d$-minor of $A$ indexed by $I \subseteq \llbracket n \rrbracket = \{0,\dots,n\}$.

\vspace{-0.5cm}
\section{The Hurwitz discriminant} \label{sec:hurwitz}

Reformulating the equations \eqref{eq:likecor}, we see that for $u \in \PP^n$ and $x \in X = \CC^d \setminus \mathcal{A}$,
\[
x \in \Crit_X(u) \qquad \text{if and only if} \qquad  (u_0/\ell_0(x),\dots,u_n/\ell_n(x) )^\transpose  \in \Ker (A^\transpose).
\]
Let $\mathcal{R}_L \subseteq \PP^n$ be the image closure of the locally closed embedding
\begin{equation}\label{eq:gamma Linv embedding}
\gamma \colon \CC^d \setminus \mathcal{A} \to \PP^n, \qquad (x_1,\dots,x_d) \mapsto ({\ell_0(x)}^{-1}:\dots:{\ell_n(x)}^{-1} ).
\end{equation}
This is the \emph{reciprocal linear space} of the matrix $L = [\,b\mid A\,]^\T$. Note that $X = \CC^d \setminus \mathcal{A} \cong \im(\gamma)$. Define
\[
\varphi \colon \TT = \PP^n \setminus \V(u_0\dotsm u_n) \to \PGr(n-d,\PP^n), \quad u \mapsto  \Ker(A^\transpose \diag(u_0,\dots,u_n)).
\]
With this notation, the critical equations become linear equations on $\im(\gamma)$:
\begin{equation}\label{eq:crit iff tangent}
x \in \Crit_X(u) \qquad \text{if and only if} \qquad \gamma(x) \in \varphi(u) \cap  \im(\gamma) .
\end{equation}
Consider the incidence $\mathcal{I}^\circ = \Set{(\Lambda, y) | y \in \Lambda \cap \im(\gamma) } \subseteq \PGr(n-d,\PP^n) \times \mathcal{R}_L$
of complementary-dimensional linear spaces intersecting~$\mathcal{R}_L$, and denote by $\mathcal{L}_X^\circ |_\TT$ the restriction of $\mathcal{L}_X^\circ \subseteq \PP^n \times X$ to $\TT \times X$.

\begin{lemma} \label{lem:cartesiansec3}
Let $f\colon {\mathcal L}_X^\circ \rightarrow \mathbb{P}^{n}$ be as in Proposition \ref{prop:hessianeq}. The following commutative diagram is cartesian:
\[\begin{tikzcd}\label{cartesianDiagram}
	{\mathcal{L}_X^\circ|_\TT} & {\mathcal{I}^\circ} \\
	\TT & {\PGr(n-d,\PP^n)}
	\arrow["{\gamma \times \varphi}", from=1-1, to=1-2]
	\arrow["f"', from=1-1, to=2-1]
	\arrow["{\pr_{\Lambda}}"', from=1-2, to=2-2]
	\arrow[""{name=0, anchor=center, inner sep=0}, "\varphi", from=2-1, to=2-2]
	\arrow["\lrcorner"{anchor=center, pos=0.125}, draw=none, from=1-1, to=0]
\end{tikzcd}\]
If all  $n \times d$-submatrices $A^0,\dots,A^n$ of $A$ have full rank $d$, then $\varphi$ is defined on $D = \PP^n \setminus \bigcup_{i<j} \V(u_i,u_j)$ and the diagram is also cartesian with $D$ in place of $\TT$.
\end{lemma}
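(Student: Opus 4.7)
The plan is to verify the cartesian property by directly identifying $\mathcal{L}_X^\circ|_\TT$ with the fiber product $\TT \times_{\PGr(n-d,\PP^n)} \mathcal{I}^\circ$, and to handle the extension of $\varphi$ to $D$ via an explicit Plücker coordinate description.

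For the first cartesian square, I first check well-definedness and commutativity: for $(u,x) \in \mathcal{L}_X^\circ|_\TT$, the equivalence \eqref{eq:crit iff tangent} gives $(\varphi(u),\gamma(x)) \in \mathcal{I}^\circ$, and both compositions around the square send $(u,x)$ to $u$. The crucial observation for the cartesian property is that $\gamma \colon X \to \im(\gamma)$ is an isomorphism of schemes, being a locally closed immersion onto its image. Consequently the fiber product, whose $T$-points are triples $(u,\Lambda,y)$ with $\varphi(u) = \Lambda$ and $y \in \Lambda \cap \im(\gamma)$, identifies functorially with $\{(u,x) \in \TT \times X \mid \gamma(x) \in \varphi(u)\}$ via $(u,\Lambda,y) \leftrightarrow (u,\gamma^{-1}(y))$. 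By \eqref{eq:crit iff tangent} this scheme is precisely $\mathcal{L}_X^\circ|_\TT$, so the canonical comparison map is an isomorphism. The main obstacle at this step is achieving scheme-theoretic rather than merely set-theoretic rigor, which is resolved by the fact that $\gamma^{-1}$ is itself a morphism on $\im(\gamma)$.

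For the second claim, the main task is to extend $\varphi$ from $\TT$ to $D$. Writing $\varphi(u) = \diag(u^{-1}) \cdot \Ker(A^\transpose)$ for $u \in \TT$ and invoking the standard duality between Plücker coordinates of a subspace and of its annihilator, one finds that $\varphi(u)$ has homogeneous Plücker coordinates proportional to the polynomials $(u_{I^c}\,|A_{I^c}|)_{|I^c|=d}$ of degree $d$ in $u$, where $u_{I^c} = \prod_{i \in I^c} u_i$. Hence $\varphi$ extends to the complement of their common zero locus. On $D$ at most one coordinate $u_i$ can vanish, and the hypothesis that the submatrix $A^i$ has rank $d$ yields some $d$-subset $J \not\ni i$ with $|A_J| \neq 0$, whence $u_J|A_J| \neq 0$ and $\varphi$ is defined at $u$. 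Once $\varphi$ is extended, the cartesian property over $D$ follows verbatim from the argument above, since \eqref{eq:crit iff tangent} only uses the linear algebra identity $A^\transpose \diag(u_0,\ldots,u_n)(1/\ell_0(x),\ldots,1/\ell_n(x))^\transpose = 0$, which remains valid throughout the domain of $\varphi$.
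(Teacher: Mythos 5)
Your proof is correct and follows essentially the same route as the paper: the cartesian property is deduced from the equivalence \eqref{eq:crit iff tangent} together with the fact that $\gamma$ is an isomorphism onto its image, and the extension of $\varphi$ to $D$ amounts to the observation that $A^\transpose \diag(u)$ retains rank $d$ there (your Plücker-coordinate computation is just an explicit form of this). The paper's proof is a terser version of the same argument; your added detail on the scheme-theoretic identification of the fiber product is a welcome elaboration, not a different method.
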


\begin{proof}
The first claim follows from Equation \eqref{eq:crit iff tangent}. Imposing the rank condition on $A$, we see that $A^\transpose \diag(u_0,\dots,u_n)$ still has rank $d$, so $\varphi$ is well-defined on $D$. The rest of the argument is the same.
\end{proof}

The branch locus of $\pr_{\Lambda}$ is the \emph{first associated hypersurface} $\mathcal{Z}_1(\mathcal{R}_L)$ \cite[Section 3.2]{gelfand2008discriminants}. 
It is an irreducible and reduced hypersurface defined by the \emph{Hurwitz form} $\Hu_{\mathcal{R}_L}$ in the Plücker ring $S(\PGr(n-d,\PP^n))$. The degree of $\Hu_{\mathcal{R}_L}$  is an invariant of the matroid $\mathcal{M}(\mathcal{A})$ of $\mathcal{A}$; for the uniform matroid it equals $2(n-d)\binom{n}{d-1}$ \cite{Sanyal2013, SturmfelsHurwitzForm}.

\begin{dfn} \label{def:hurdisc} The closure of the pullback of $\mathcal{Z}_1(\mathcal{R}_L)$ along $\varphi\colon \PP^n \dashrightarrow \PGr(n-d,\PP^n)$ is the \emph{Hurwitz discriminant} $\Hudisc(X) \coloneqq \varphi^{-1}(\mathcal{Z}_1(\mathcal{R}_L))$.
\end{dfn}

This is well-defined, since the complement of the maximal domain of definition of $\varphi$ has codimension at least $2$, so the hypersurface $\Hudisc(X) \subseteq \PP^n$ is uniquely determined. On $\TT$, one obtains the defining equation for $\nabla_{\rm Hu}$ from that of ${\mathcal Z}_1({\mathcal R}_L)$ by substituting $p_I = \det(A_{I}^\perp) \prod_{i \in I} u_i^{-1}$. Here, for $I \in \binom{\llbracket n \rrbracket}{n+1-d}$, the $p_I$ are the Pl\"ucker coordinates on $\mathbb{G}(n-d,\mathbb{P}^n)$ and $A^\perp$ represents $\ker(A^\T)$, i.e., $\im(A^\perp) = \ker(A^\transpose)$. The matrix $A^\perp_I$ consists of the rows of $A^\perp$ indexed by $I$.

Whenever $\Hudisc(X)$ is neither empty nor all of $\PP^n$, it is necessarily a hypersurface. The same need not be true for $\logdisc(X)$, as the following example shows.

\begin{exa}\label{example:reducibleAndCodim2}
    Let $n+1 = 6$ and $d = 3$. We consider the matrix 
    \begin{equation*}
        L \, = \,  [b \mid A]^\T \, = \, \begin{pmatrix}
            1 & 2 & 1 & 0 & 0 & 0 \\ 
            1 & 1 & 2 & 1 & 0 & 1 \\ 
            1 & \frac{3}{2} & \frac{3}{2} & 0 & 1 & 1\\
            0 & 0 & 0 & 1 & 1 & 2
        \end{pmatrix}.
    \end{equation*}
    A computation in \texttt{Macaulay2} \cite{M2} shows that
    \begin{align*}
        \logdisc(X) =& \V(144u_0^2+120u_0u_1+168u_0u_2+25u_1^2-70u_1u_2+49u_2^2) \\
        &\cup \V(u_3^2-2u_3u_4+4u_3u_5+u_4^2+4u_4u_5+4u_5^2) \\
        &\cup \V(u_0+u_1+u_2, u_3+u_4+u_5).
    \end{align*}
    The codimension~$1$ part of $\logdisc(X)$ is reducible and there is an additional linear codimension~$2$ component. In particular, $\logdisc(X)$ is not a hypersurface and therefore cannot agree with $\Hudisc(X)$ (not even as sets). For general $u$ in the codimension~$2$ component, ${\rm Crit}_X(u)$ has dimension~$1$. 
\end{exa}

Nonetheless, Equation~\eqref{eq:crit iff tangent} indicates that $\logdisc(X)$ and $\Hudisc(X)$ should be closely related, and indeed we always have the following.

\begin{prop}\label{prop:containment}
We have the inclusion $\logdisc(X) \cap \TT \subseteq \Hudisc(X) \cap \TT$ of schemes. If all submatrices $A^i$ have full rank $d$, then this also holds on $D$.
\end{prop}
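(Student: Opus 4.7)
The plan is to deduce the containment from the cartesian square of Lemma~\ref{lem:cartesiansec3} together with the compatibility of ramification under base change. Writing $g \coloneqq \gamma \times \varphi$, a direct count (projecting onto $\mathcal{R}_L$) gives $\dim \mathcal{I}^\circ = d + (n-d)d = d(n-d+1) = \dim \PGr(n-d,\PP^n)$, so $\pr_\Lambda$ is a generically étale morphism between smooth irreducible varieties of equal dimension, as is the restriction of $f$ to $\mathcal{L}_X^\circ|_\TT$. Both ramification loci are therefore cut out by the zeroth Fitting ideals of the sheaves of relative differentials. The cartesian property gives a canonical isomorphism $\Omega^1_{\mathcal{L}_X^\circ|_\TT/\TT}\cong g^*\Omega^1_{\mathcal{I}^\circ/\PGr(n-d,\PP^n)}$, under which Fitting ideals pull back. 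This yields the scheme-theoretic equality $\Ram(f|_\TT) = g^{-1}(\Ram(\pr_\Lambda))$.

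Next I would transfer this to the branch loci via the commutativity $\varphi \circ f = \pr_\Lambda \circ g$. The previous step shows that the restriction $g|_{\Ram(f|_\TT)}$ factors through the closed subscheme $\Ram(\pr_\Lambda)\subseteq \mathcal{I}^\circ$, so its post-composition with $\pr_\Lambda$ factors through the scheme-theoretic image $\Branch(\pr_\Lambda) = \mathcal{Z}_1(\mathcal{R}_L)$. By commutativity this post-composition equals $\varphi \circ f|_{\Ram(f|_\TT)}$, and hence $f|_{\Ram(f|_\TT)}$ itself factors through the closed subscheme $\varphi^{-1}(\mathcal{Z}_1(\mathcal{R}_L)) = \Hudisc(X) \cap \TT$. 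The universal property of the scheme-theoretic image then yields $\logdisc(X) \cap \TT \subseteq \Hudisc(X) \cap \TT$.

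The extension from $\TT$ to $D$ is immediate, since the second half of Lemma~\ref{lem:cartesiansec3} provides the same cartesian diagram with $D$ in place of $\TT$ under the rank assumption on the $A^i$, and the argument above is formal. I expect the main obstacle to be the scheme-theoretic base change for ramification rather than anything conceptual: the set-theoretic containment is essentially immediate from~\eqref{eq:crit iff tangent}, but the scheme-theoretic version genuinely requires both the Fitting-ideal compatibility coming from the cartesian square and the equidimensionality of the two vertical maps. Once these are in hand, commutativity does the rest.
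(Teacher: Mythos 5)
Your proposal is correct and follows essentially the same route as the paper: the paper isolates your argument as a standalone proposition on cartesian squares of smooth varieties, proving $\Ram(f') = \varphi'^{-1}(\Ram(f))$ via base change of relative differentials and compatibility of zeroth Fitting ideals with pullback, and then deducing the branch-locus containment from the factorization of the scheme-theoretic image, exactly as you do inline. The only cosmetic difference is that the paper states the base-change lemma in general form and cites the Stacks Project for the two Fitting-ideal ingredients rather than invoking the dimension count explicitly.
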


For an instance in which the inclusion is strict and both $\nabla_{\rm Hu}$ and $\nabla_{\rm log}$ are hypersurfaces, see Example~\ref{ex:M05first}. Proposition~\ref{prop:containment} follows from applying the following general result to the diagram in Lemma \ref{lem:cartesiansec3}.

\begin{prop}
Given a cartesian diagram of smooth varieties
\[\begin{tikzcd}[column sep=tiny]
	{\Ram(f')} & {X'} & {\,} & X & {\Ram(f)} \\
	{\Branch(f')} & {Y'} && Y & {\Branch(f)}
	\arrow["\subseteq"{description}, draw=none, from=1-1, to=1-2]
	\arrow["{\varphi'}", from=1-2, to=1-4]
	\arrow["{f'}", from=1-2, to=2-2]
	\arrow["\lrcorner"{anchor=center, pos=0.125}, draw=none, from=1-2, to=2-4]
	\arrow["f", from=1-4, to=2-4]
	\arrow["\supseteq"{description}, draw=none, from=1-5, to=1-4]
	\arrow["\subseteq"{description}, draw=none, from=2-1, to=2-2]
	\arrow["\varphi", from=2-2, to=2-4]
	\arrow["\supseteq"{description}, draw=none, from=2-5, to=2-4]
\end{tikzcd}\]
we have that $\Ram(f') = \varphi'^{-1}(\Ram(f))$ and $\Branch(f') \subseteq \varphi^{-1}(\Branch(f))$ as subschemes of $X'$ and $Y'$ respectively.
\end{prop}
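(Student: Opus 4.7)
Both assertions follow by tracking how the objects involved behave under base change, and I would treat the two parts independently.

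For the equality $\Ram(f') = (\varphi')^{-1}(\Ram(f))$, the strategy is to use the description of the ramification locus as the vanishing scheme of a Fitting ideal. For a morphism of smooth varieties of equal dimension, one has $\Ram(f) = V(\Fit_0 \Omega_{X/Y})$, which in local coordinates matches the Jacobian determinant description used in Proposition~\ref{prop:hessianeq}. Any cartesian square produces a canonical isomorphism $\Omega_{X'/Y'} \cong (\varphi')^*\Omega_{X/Y}$, and Fitting ideals commute with pullback, so $\Fit_0 \Omega_{X'/Y'} = (\varphi')^{-1}(\Fit_0 \Omega_{X/Y}) \cdot \mathcal{O}_{X'}$. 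Taking vanishing loci gives the desired scheme-theoretic equality in $X'$.

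For the inclusion $\Branch(f') \subseteq \varphi^{-1}(\Branch(f))$, I would invoke the universal property of the fiber product $\varphi^{-1}(\Branch(f)) = Y' \times_Y \Branch(f)$. By the first part, $\varphi'$ maps $\Ram(f')$ into $\Ram(f)$. Hence the composition $f \circ \varphi'|_{\Ram(f')} \colon \Ram(f') \to Y$ factors through the scheme-theoretic image $\Branch(f)$, by minimality of the latter. Commutativity $\varphi \circ f' = f \circ \varphi'$ rephrases this as the statement that $\varphi \circ f'|_{\Ram(f')}$ factors through $\Branch(f)$. The universal property then yields a unique factorization $f'|_{\Ram(f')} \colon \Ram(f') \to \varphi^{-1}(\Branch(f)) \hookrightarrow Y'$. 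Applying minimality once more, this time of the scheme-theoretic image $\Branch(f')$, gives the containment.

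The main subtlety is keeping everything at the scheme-theoretic level rather than just on points: the compatibility of relative differentials with arbitrary base change (no flatness of $\varphi$ needed) and the commutativity of Fitting ideals with pullback are the two standard facts that make the first equality work as schemes. One does not in general obtain equality in the second statement because the scheme-theoretic image does not commute with arbitrary pullback; $\varphi^{-1}(\Branch(f))$ may contain components over which no part of $\Ram(f')$ is dominant.
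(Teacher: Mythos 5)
Your proposal is correct and follows essentially the same route as the paper: the equality of ramification loci via the base-change isomorphism $\Omega_{X'/Y'} \cong (\varphi')^*\Omega_{X/Y}$ together with compatibility of Fitting ideals with pullback (the two Stacks Project lemmas the paper cites), and the containment of branch loci via minimality of the scheme-theoretic image. Your closing remark about why equality can fail in the second statement is a nice addition but not needed for the claim.
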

\begin{proof}
Let $\Omega_X$, $\Omega_Y$ be the cotangent bundles on $X$ and $Y$, respectively. Let $\delta\! f \colon f^*\Omega_Y \to \Omega_X$ be the dual differential map. Let $\Omega_f = \operatorname{Coker}(\delta\! f)$ be the sheaf of relative differentials. Then the ramification locus of $f$ as defined above coincides with the 0th Fitting ideal $\Fit_0(\Omega_f)$.

Using this perspective, the statement about ramification loci is the combination of  \cite[\href{https://stacks.math.columbia.edu/tag/01UX}{Lemma 01UX}]{stacks-project} and \cite[\href{https://stacks.math.columbia.edu/tag/0C3D}{Lemma 0C3D}]{stacks-project}. The statement about Branch loci follows as we have scheme-theoretic inclusions
\[
f'(\Ram(f')) = f'(\varphi'^{-1}(\Ram(f))) \subseteq \varphi^{-1}(f(\Ram(f))).   \qedhere
\]
\end{proof}


\section{Positivity of the logarithmic discriminant} \label{sec:positivity}

As a first application of the Hurwitz discriminant we show positivity of the logarithmic discriminant for real arrangements. For $i = 0, \dots, n$, let $H_i \coloneqq \overline{\V(\ell_i)} \subseteq \PP^n$ be the closures of the affine hyperplanes. The \emph{flats} of the matroid $\mathcal{M}({\mathcal A})$ of our arrangement are the various intersections of subsets of the $H_i$. We say that ${\mathcal A}$ \emph{has no flats at infinity} if no non-empty flats are contained in~$\mathbb{P}^d \setminus \mathbb{C}^d$.

\begin{thm}\label{thm:noFlatsAtInfinity}
If $\mathcal{A}$ has no flats at infinity and if $u \in (\CC^\times)^{n+1}$ is such that $\Crit_X(u)$ consists of $r = (-1)^d\cdot \chi(X)$ reduced points, then $u \notin \logdisc(X)$.
\end{thm}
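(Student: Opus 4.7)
My plan is to combine Proposition~\ref{prop:hessianeq}, which identifies $\logdisc(X)$ with the branch closure $\overline{f(\Ram(f))}$ of the projection $f\colon\mathcal{L}_X^\circ\to\PP^n$, with a limiting/sequence argument in which the ``no flats at infinity'' hypothesis prevents critical points from escaping. I argue by contradiction: suppose $u\in\logdisc(X)$. The hypothesis on $u$ rules out $u$ itself from the naive bad locus $B\coloneqq\{u'\in\PP^n\mid\Crit_X(u')\text{ is infinite or non-reduced}\}$, so $u\in\overline B\setminus B$, and I may choose $u_k\to u$ in $B$ together with $x_k\in\Crit_X(u_k)$ satisfying $\det H_{x_k}(\mathcal L_{u_k})=0$ (such $x_k$ exists since $u_k\in B$ means $\Crit_X(u_k)$ is either positive-dimensional or has a non-reduced point, both detected by vanishing of the Hessian determinant). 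By compactness of $\PP^d$, pass to a subsequence so that $x_k\to x^\ast\in\PP^d$. The crux is to show $x^\ast\in X$. Given this, continuity of $\nabla\mathcal L_u$ and of $\det H_x(\mathcal L_u)$ on $\TT\times X$ immediately yields $x^\ast\in\Crit_X(u)$ with $\det H_{x^\ast}(\mathcal L_u)=0$, producing a non-reduced point of $\Crit_X(u)$ that contradicts the hypothesis.

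To rule out $x^\ast\in\PP^d\setminus X$, I would analyze the limit of \eqref{eq:likecor}: if $x^\ast$ lies on $\mathcal A$ or at infinity in $\PP^d$, then the set $S$ of indices $i$ for which $\ell_i(x_k)\to 0$ (or diverges after projectivization) is nonempty and places $x^\ast$ in the flat $F=\bigcap_{i\in S}H_i$. Since $u_{k,i}\to u_i\neq 0$ (using $u\in(\CC^\times)^{n+1}$), the dominant terms in $A^\transpose\diag(1/\ell_0(x_k),\dots,1/\ell_n(x_k))u_k=0$ are those with $i\in S$, and the forced cancellation among them in the valuative limit pins $F$ entirely inside $\PP^d\setminus\CC^d$, directly contradicting the ``no flats at infinity'' hypothesis. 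Rigorously, this is a properness/valuative assertion for $f$ over $\TT$, best formalized on a suitable compactification of $X$ (the wonderful compactification of $\mathcal A$, or the reciprocal linear space of Section~\ref{sec:hurwitz}), whose boundary divisors are indexed by flats and whose strata supported over $\TT$ are precisely the flats at infinity.

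\textbf{Main obstacle.} The escape-prevention step is the crux. Classically (Varchenko; Orlik--Terao--Silvotti), one knows for real positive $u$ that master-function critical points remain in bounded chambers under ``no flats at infinity''; upgrading this to a complex algebro-geometric statement strong enough to carry the Hessian-vanishing condition, and not just the gradient equation, across the limit is the bulk of the work. A clean alternative would be to argue directly via Proposition~\ref{prop:containment} that $u\notin\Hudisc(X)$, since the $r$ reduced critical points give $r$ reduced intersection points of $\varphi(u)$ with $\mathcal R_L$, and ``no flats at infinity'' should guarantee both that $\deg\mathcal R_L=r$ and that $\varphi(u)\cap(\mathcal R_L\setminus\im(\gamma))=\emptyset$, making $\varphi(u)\cap\mathcal R_L$ transverse and hence $\varphi(u)\notin\mathcal Z_1(\mathcal R_L)$.
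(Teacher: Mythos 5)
Your overall reduction is sound: since the naive bad set $B=\{u'\mid \Crit_X(u')\ \text{infinite or non-reduced}\}$ is constructible, a point $u\in\logdisc(X)\setminus B$ would indeed admit a sequence $u_k\to u$ in $B$ with degenerate critical points $x_k$, and a subsequential limit $x^\ast\in X$ would contradict the hypothesis. But the step you yourself flag as the crux --- ruling out $x^\ast\in\PP^d\setminus X$ --- is not proved, and the sketch you give for it does not work as stated. If $x^\ast$ is a \emph{finite} point of $\mathcal{A}$, the flat $F=\bigcap_{i\in S}H_i$ you produce is a non-empty flat that is \emph{not} at infinity, so the ``no flats at infinity'' hypothesis yields no contradiction there. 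Critical points genuinely can drift into finite strata of $\mathcal{A}$ for special parameter values (this is exactly what the extra factors of the data discriminant of \cite{rodriguez2015data} record, as noted in the introduction), so any correct argument must exploit the global hypothesis that $\Crit_X(u)$ already consists of the maximal number $r=(-1)^d\chi(X)$ of reduced points; your local valuative analysis never uses this, and the hypothesis on flats alone cannot forbid escape.

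The paper closes this gap by a global count rather than a limit analysis: it shows that the set of $u$ with $r$ reduced critical points is open. The implicit function theorem keeps $r$ distinct reduced points $\xi_1(\varepsilon),\dots,\xi_r(\varepsilon)$ under perturbation; ``no flats at infinity'' enters only through the fact that it forces $\deg\mathcal{R}_L=r$ \cite[Corollary 3.9]{betti2024proudfoot}; and the B\'ezout inequality on the number of irreducible components \cite[Theorem 8.28]{Brgisser1997} then shows that $\im(\gamma)\cap\varphi(u+\varepsilon)$ has at most $r$ components, so no extra component $Z$ (near $\mathcal{A}$, at infinity, or anywhere else) can appear. Your ``clean alternative'' via $\Hudisc(X)$ and Proposition~\ref{prop:containment} is actually the closer route: the two claims you leave as ``should hold'' ($\deg\mathcal{R}_L=r$ and emptiness of $\varphi(u)\cap(\mathcal{R}_L\setminus\im(\gamma))$) are precisely what the cited degree formula together with this component count deliver. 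As written, however, both of your versions leave the decisive step unproved.
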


\begin{proof}
It suffices to show that the set of such $u \in \CC^{n+1}$ is open. By the implicit function theorem, perturbing $u$ by a small $\varepsilon \in \CC^{n+1}$ has the effect that
\[
\Crit_X(u+\varepsilon) = \{\xi_1(\varepsilon),\dots,\xi_r(\varepsilon)\} \cup Z
\]
where $Z$ is a union of (potentially positive-dimensional) components not containing the $\xi_i(\varepsilon)$, and the latter are pairwise distinct. We want to show that $Z = \emptyset$.
The locally closed embedding $\gamma$ from \eqref{eq:gamma Linv embedding} realizes $\Crit_X(u+\varepsilon)$ as the intersection $\im(\gamma) \cap \varphi(u+\varepsilon)$. If ${\mathcal A}$ has no flats at infinity, then ${\mathcal R}_L \subseteq \PP^n$ is a projective variety of degree $r$ \cite[Corollary 3.9]{betti2024proudfoot}. Hence, by a version of the Bézout inequality \cite[Theorem 8.28]{Brgisser1997}, the intersection $\im(\gamma) \cap \varphi(u+\varepsilon)$ has at most $r$ components. Hence $Z=\emptyset$ and $\Crit_X(u+\varepsilon)$ consists of $r$ reduced points.
\end{proof}

\begin{cor}\label{cor:positivity}
Let $\mathcal{A} \subseteq \CC^d$ be a real arrangement with $r$ bounded real regions. Then for any $u \in \RR_+^{n+1}$, $\Crit_X(u)$ consists of $r$ reduced points. If $\mathcal{A}$ has no flats at infinity, then $\logdisc \cap \RR_+^{n+1} = \emptyset$.
\end{cor}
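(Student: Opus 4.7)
The plan is to deduce both claims as essentially immediate consequences of Varchenko's theorem \cite[Theorem 1.2.1]{varchenko1995critical} combined with Theorem~\ref{thm:noFlatsAtInfinity}.

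For the first assertion, I would fix $u \in \RR_+^{n+1}$ and pass to the real picture. On each of the $r$ bounded components of $\RR^d \setminus \mathcal{A}$, every $\ell_i$ has constant sign, so $\sum_{i=0}^n u_i \log |\ell_i|$ is a positive combination of concave functions; since $\mathcal{A}$ is essential the linear parts of the $\ell_i$ span $\RR^d$ and the sum is in fact strictly concave. It tends to $-\infty$ at the boundary of the chamber and hence attains a unique non-degenerate interior maximum, which is a reduced critical point of $\mathcal{L}_u$ in the sense of~\eqref{eq:criteqs}. This produces $r$ distinct reduced points in $\Crit_X(u)$, and Varchenko's theorem tells us that these already exhaust the complex critical locus: the generic count $(-1)^d \chi(X) = r$ is realized entirely by the real critical points lying in bounded chambers. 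Hence $\Crit_X(u)$ consists of exactly $r$ reduced points.

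For the second assertion, observe that $\RR_+^{n+1} \subseteq (\CC^\times)^{n+1}$, and by the first part $\Crit_X(u)$ consists of $r = (-1)^d \chi(X)$ reduced points for every positive $u$. Under the hypothesis that $\mathcal{A}$ has no flats at infinity, Theorem~\ref{thm:noFlatsAtInfinity} then directly yields $u \notin \logdisc(X)$, proving $\logdisc \cap \RR_+^{n+1} = \emptyset$.

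The statement is really a clean application of the two inputs above, so I do not anticipate any substantial technical obstacle. The only mild subtlety is that Varchenko's theorem must be invoked to exclude non-real complex critical points when $u$ is positive; once the real ones are seen to saturate the generic count, the second assertion reduces to a direct citation of Theorem~\ref{thm:noFlatsAtInfinity}.
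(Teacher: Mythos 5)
Your proposal is correct and follows the same route as the paper: the first claim is exactly Varchenko's theorem \cite[Theorem 1.2.1]{varchenko1995critical} (your concavity discussion just re-sketches its proof), and the second claim is a direct application of Theorem~\ref{thm:noFlatsAtInfinity} since $\RR_+^{n+1} \subseteq (\CC^\times)^{n+1}$. No gaps.
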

\begin{proof}
The first statement is a theorem of Varchenko \cite[Theorem 1.2.1]{varchenko1995critical}. The second claim follows from the first and Theorem~\ref{thm:noFlatsAtInfinity}.
\end{proof}

\section{Arrangements of points on the line} \label{sec:d=1}

In this section we study the situation for $d=1$, that is, arrangements of $n+1\geq 3$ distinct points $\mathcal{A}$ on the affine line $\CC^1$. In this case, we are able to provide a complete description. Similar statements in higher dimensions may not hold or are harder to prove. Below, the map $f \colon \mathcal{L}_X^\circ \rightarrow \mathbb{P}^n$ is as in Proposition \ref{prop:hessianeq}.

\begin{thm}\label{thm:d=1 case}
The discriminant $\logdisc(L) \subseteq \PP^n$ of an arrangement of $n+1 \geq 3$ points in $\mathbb{C}^1$ is an irreducible reduced hypersurface of degree $2(n-1)$. The class of $\Ram(f)$ in $\operatorname{A}^\bullet(\PP^n \times \PP^1) = \ZZ[\alpha,\beta]/\langle \alpha^{n+1},\beta^2\rangle$ equals $\alpha^2 + 2(n-1)\alpha\beta$.
\end{thm}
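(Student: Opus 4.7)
The plan is to reduce the theorem to the classical theory of univariate polynomial discriminants via a Lagrange-basis isomorphism, and then compute the class of $\Ram(f)$ by intersecting with generic cycles of complementary dimension. Write $\ell_i(x) = x - p_i$ for distinct $p_0,\ldots,p_n \in \CC$ and set $Q_i(x) = \prod_{j \neq i}(x - p_j)$. A direct computation gives
\[
\mathcal{L}_u'(x) \,=\, \frac{P_u(x)}{\prod_j(x - p_j)} \qquad \text{with} \qquad P_u(x) \coloneqq \sum_{i=0}^n u_i Q_i(x) \in \CC[x]_{\leq n},
\]
and, on the critical locus, $\mathcal{L}_u''(x) = P_u'(x)/\prod_j(x - p_j)$. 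Hence $\Crit_X(u)$ consists of the roots of $P_u$ in $X$, and such a root is non-reduced iff it is a double root of $P_u$. Since $Q_0,\ldots,Q_n$ are nonzero scalar multiples of the Lagrange interpolation basis of $\CC[x]_{\leq n}$, the map $u \mapsto P_u$ is a linear isomorphism $\Phi \colon \PP^n \xrightarrow{\sim} \PP(\CC[x]_{\leq n})$. Under $\Phi$, the set $\{u : P_u \text{ has a double root in } X\}$ differs from the classical locus of polynomials with a double root in $\PP^1$ only by the codimension-$2$ pieces where the double root sits at some $p_i$ (imposing $u_i = 0$ plus an extra linear condition) or at infinity (imposing that the top two coefficients of $P_u$ vanish). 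Taking closures and using irreducibility of the classical discriminant, $\logdisc(X)$ corresponds under $\Phi$ to the classical univariate discriminant hypersurface, cut out by a single irreducible reduced polynomial of degree $2n-2$; see e.g.\ \cite[Ch.~12]{gelfand2008discriminants}. This gives the first assertion.

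For the class of $\Ram(f)$, its closure $\overline{\Ram(f)} \subseteq \PP^n \times \PP^1$ has dimension $n-1$, so its class has the form $a\alpha^2 + b\alpha\beta$ for some integers $a, b$. I would determine $a$ by intersecting with a generic representative $L_2 \times \{y_0\}$ of $\alpha^{n-2}\beta$, where $L_2 \subseteq \PP^n$ is a generic $2$-plane and $y_0 \in \CC \setminus \{p_0,\ldots,p_n\}$ is generic. The intersection is $\{u \in L_2 : P_u(y_0) = P_u'(y_0) = 0\}$; the two linear forms in $u$ have coefficient vectors $(Q_i(y_0))_i$ and $(Q_i'(y_0))_i$, which are linearly independent for generic $y_0$ because the $p_i$ are distinct. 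Hence the intersection is a single transverse point, giving $a = 1$. To compute $b$, I intersect with $L \times \PP^1$ for a generic line $L \subseteq \PP^n$, representing $\alpha^{n-1}$. The intersection consists of pairs $(u,x)$ with $u \in L$ and $x$ a double root of $P_u$; since a generic member of the classical discriminant has exactly one double root, this count equals $|L \cap \logdisc| = \deg \logdisc = 2(n-1)$, so $b = 2(n-1)$. Therefore $[\Ram(f)] = \alpha^2 + 2(n-1)\alpha\beta$.

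The main technical obstacle is the codimension-$2$ comparison needed to identify $\logdisc(X)$ with the classical discriminant. One must verify that both exceptional loci (double root at some $p_i$; double root at infinity) have codimension $\geq 2$ in $\PP^n$, so that they do not spoil the hypersurface structure of $\logdisc(X)$ or its reducedness. Once this is in hand, irreducibility and reducedness descend from the classical discriminant via the linear isomorphism $\Phi$. The class computation is then a straightforward intersection-theoretic exercise, supported by the observation that $\Ram(f) \to X$ is a $\PP^{n-2}$-bundle (the two conditions $P_u(x) = P_u'(x) = 0$ are linearly independent for every $x \in X$, since the $p_i$ are distinct), which guarantees that $\overline{\Ram(f)}$ is irreducible of the expected dimension and that the transversality assumptions in the class computation hold for generic linear cycles.
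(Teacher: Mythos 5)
Your proposal is correct, and it takes a genuinely different route from the paper. The paper works with the two cleared-denominator equations $g_1^{\rm h}, g_2^{\rm h}$ as a complete intersection $W_1\cap W_2$ in $\PP^n\times\PP^1$, identifies the $n+2$ boundary components $\V(u_i)\times\{-b_i\}$ and $\V(u_0+\dots+u_n)\times\{\infty\}$ explicitly, subtracts their classes from $[W_1][W_2]$ to get $[\Ram(f)]=\alpha^2+2(n-1)\alpha\beta$, and then divides by $\deg(f|_{\Ram(f)})$; the latter requires Lemma~\ref{lem:RamBranchbirat}, whose generic-injectivity step rests on a symbolic \texttt{Macaulay2} computation with the auxiliary incidence variety $\mathcal{B}$. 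You instead exploit that $(Q_i)_i$ is a rescaled Lagrange basis, so $u\mapsto P_u$ is a linear isomorphism onto $\PP(\CC[x]_{\le n})$, and you pull back the classical univariate discriminant; since the exceptional loci (double root at some $p_i$, forcing $u_i=0$ plus one more linear condition, or at infinity, forcing $\sum u_i=0$ plus one more) have codimension $2$, irreducibility, reducedness, the degree $2(n-1)$, and the generic uniqueness of the double root all descend from GKZ at once — no computer algebra needed. This in effect also proves the formula $\Delta_{\log}=\operatorname{Disc}_x(g_1^{\rm h})$ of Corollary~\ref{cor:flat family resultant} directly. Your class computation by test cycles ($a=1$ from $L_2\times\{y_0\}$, $b=2(n-1)$ from $L\times\PP^1$) is sound; for completeness you should record that $\overline{\Ram(f)}\setminus\Ram(f)$ sits over the $n+2$ boundary points of $\PP^1$ and projects into codimension $\ge 2$ in $\PP^n$, so a generic line $L$ misses it and meets $\logdisc$ transversally at points with a unique, exactly double root in $X$ — but this is routine given your $\PP^{n-2}$-bundle observation. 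What the paper's route buys in exchange is the explicit boundary decomposition of $W_1\cap W_2$, which yields the factorization $\operatorname{Res}_x(g_1^{\rm h},g_2^{\rm h})=u_0\dotsm u_n(u_0+\dots+u_n)\Delta_{\log}$ in Corollary~\ref{cor:flat family resultant}.
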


\begin{exa}\label{exa:3pts in C1}
Consider three linear forms $(x,\,x+1,\,x+b)$ with $b \notin \{0,1\}$, then 
\[
\Delta_{\log}(L) = (b-1)^2u_{0}^{2} + 2b(b-1)u_{0}u_{1} + b^{2}u_{1}^{2} - 2(b-1)u_{0}u_{2} + 2bu_{1}u_{2} + u_{2}^{2}.
\]
This ternary quadric in $u_0,u_1,u_2$ has discriminant $-4b^2(b-1)^2$, hence is smooth and irreducible for any choice of $b \notin \{ 0,1 \}$.
\end{exa}

Without loss of generality we can assume $\ell_i(x) = x + b_i$, since the discriminant is independent of scaling the linear forms. The ramification locus is defined in $\PP^n \times (\CC^1 \setminus \{-b_0,\dots,-b_n\})$ by the two equations
\begin{align*}
0 &= g_1(u;x) = \frac{u_0}{x + b_0} + \dots + \frac{u_n}{x + b_n}, \\
0 &= g_2(u;x) =  \frac{u_0}{(x + b_0)^2} + \dots + \frac{u_n}{(x + b_n)^2}. 
\end{align*}

\begin{lemma} \label{lem:RamBranchbirat}
The ramification locus $\Ram(f)$ is a smooth $(n-1)$-dimensional irreducible variety. The projection $f \colon \Ram(f) \to \Branch(f)$ is birational. Hence, $\logdisc \subseteq \PP^n$ is an irreducible and reduced hypersurface.
\end{lemma}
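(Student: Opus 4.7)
The plan is to analyze $\Ram(f)$ via its second projection $\pi \colon \Ram(f) \to X$ and then to prove birationality of $f$ by a dimension count bounding the locus of $u$ whose critical polynomial has more than one double root. First I would observe that, after clearing denominators, the equations $g_1(u;x) = g_2(u;x) = 0$ on $\PP^n \times X$ are equivalent to $P_u(x) = P_u'(x) = 0$ for
\[
P_u(x) \;=\; \textstyle\sum_{i=0}^n u_i \prod_{j \neq i}(x+b_j) \in \CC[x]_{\leq n},
\]
since $g_2 = -\partial_x g_1$ and $Q(x) = \prod_j (x+b_j)$ is invertible on $X$. Thus $(u,x) \in \Ram(f)$ exactly when $x$ is a scheme-theoretic double root of $P_u$, and $u \mapsto P_u$ is a linear isomorphism $\CC^{n+1} \xrightarrow{\sim} \CC[x]_{\leq n}$ (Lagrange interpolation basis).

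To show $\Ram(f)$ is smooth, irreducible, and $(n-1)$-dimensional, I would verify that $\pi$ is a $\PP^{n-2}$-bundle. For fixed $x_0 \in X$, the two linear forms $P_\cdot(x_0)$ and $P_\cdot'(x_0)$ in $u$ are linearly independent: the $2 \times 2$ minor of the coefficient matrix using the $u_0, u_1$ rows equals $(b_1 - b_0) \prod_{j \neq 0,1}(x_0 + b_j)^2$, which is nonzero on $X$. Hence every fiber of $\pi$ is a $\PP^{n-2}$, and the three properties of $\Ram(f)$ follow from the corresponding properties of $X$.

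For birationality, consider the incidence
\[
T \;=\; \bigl\{(u, x_1, x_2) \in \PP^n \times X^2 \;:\; x_1 \neq x_2, \ P_u(x_k) = P_u'(x_k) = 0 \text{ for } k=1,2\bigr\}.
\]
Via the Lagrange isomorphism, the fiber of $T$ over a pair $x_1 \neq x_2$ corresponds to polynomials in $\CC[x]_{\leq n}$ divisible by $(x-x_1)^2(x-x_2)^2$, a projective space of dimension $n-4$ (empty for $n \leq 3$). Hence $\dim T \leq (n-4) + 2 = n-2$. The projection $T \to \Ram(f)$, $(u, x_1, x_2) \mapsto (u, x_1)$, has image of dimension at most $n-2$, strictly smaller than $n-1 = \dim \Ram(f)$. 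Outside this image, $P_u$ has $x$ as its unique double root in $X$, so $f^{-1}(f(u,x)) = \{(u,x)\}$. This proves $f$ is generically one-to-one and therefore birational onto its image.

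Finally, the image $\Branch(f) = \overline{f(\Ram(f))} \subseteq \PP^n$ is irreducible (as the image of the irreducible $\Ram(f)$), has dimension $n-1$ by birationality (hence is a hypersurface), and is reduced since the scheme-theoretic image of a reduced scheme is reduced. Thus $\logdisc = \Branch(f)$ is an irreducible reduced hypersurface. The main hurdle is the birationality step, but the Lagrange isomorphism renders the condition ``$P_u$ has a second double root in $X$'' as the divisibility relation $(x-x_1)^2(x-x_2)^2 \mid P_u$, which makes the dimension count of $T$ transparent.
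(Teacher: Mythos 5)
Your proof is correct, and while it follows the same two-step skeleton as the paper's argument --- first exhibiting $\Ram(f)$ as fibered over $X$ in projective spaces of dimension $n-2$, then bounding the dimension of the incidence variety of configurations with two distinct degenerate critical points --- the execution of the second step is genuinely different and, in one respect, stronger. The paper establishes that its variety $\mathcal{B}$ (your $T$) is a $\PP^{n-4}$-bundle over $\set{(x,y) \in X^2 | x \neq y}$ by \emph{a direct symbolic computation}, with the verification delegated to \texttt{Macaulay2} code in the online repository. You instead observe that clearing the denominator $Q = \prod_j(x+b_j)$, which is a unit on $X$, turns $\langle g_1, g_2\rangle$ into $\langle P_u, P_u'\rangle$, and that $u \mapsto P_u$ is the Lagrange-interpolation isomorphism $\CC^{n+1} \xrightarrow{\sim} \CC[x]_{\leq n}$; the fiber of $T$ over $(x_1,x_2)$ is then visibly the projectivization of the $(n-3)$-dimensional space of polynomials divisible by $(x-x_1)^2(x-x_2)^2$, with no computation needed. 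This replaces a computer-assisted step by a transparent conceptual one and makes the emptiness for $n \leq 3$ immediate. Your first step is also a clean projectivized version of the paper's elimination of $u_0,u_1$: the explicit minor $(b_1-b_0)\prod_{j\neq 0,1}(x_0+b_j)^2$ is exactly the invertibility condition the paper uses implicitly, and it simultaneously gives smoothness of the ramification \emph{scheme} via the Jacobian criterion. The only point worth stating explicitly is that generic injectivity implies birationality onto the image because we are in characteristic zero; the paper elides this in the same way.
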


\begin{proof}
The cone over $\Ram(f)$ in $\CC^{n+1} \times X = \CC^{n+1} \times (\CC^1 \setminus \{-b_0,\dots,-b_n\})$ is defined by $\langle g_1,g_2 \rangle \subseteq R[u_0,\dots,u_n;x]_{\ell_0\dotsm\ell_n}$. Substituting $u_0$ and $u_1$, we see
\[
\frac{\CC[u_0,\dots,u_n;x]_{\ell_0\dotsm\ell_n}}{\langle g_1,g_2 \rangle} \, \cong \,  \frac{\CC[u_1,\dots,u_n;x]_{\ell_0\dotsm\ell_n}}{\langle \tilde{g}_2 \rangle} \,  \cong \, \CC[u_2,\dots,u_n;x]_{\ell_0\dotsm\ell_n},
\]
where $\tilde{g}_2 = g_2$ after substituting $u_0$. This shows smoothness and irreducibility of $\Ram(f)$. Since $\dim X = 1$, the projection is generically finite. It remains to show generic injectivity. For this, consider the following variety:
\[
\mathcal{B} \coloneqq \Set{(u,x,y) \in \PP^n \times X\times X | \!\!\!\begin{array}{c}
x\neq y, \\ g_1(u;x)=g_2(u;x)=g_1(u;y)=g_2(u;y)=0
\end{array}\!\!\! }.
\]
A direct symbolic computation shows that this is a $\PP^{n-4}$-bundle over the two-dimensional irreducible variety $\set{ (x,y) \in X \times X | x \neq y }$ for $n \geq 4$ (for smaller $n$ $\mathcal{B}$ is empty). The \texttt{Macaulay2} code to verify this can be found at \cite{mathrepo}. It follows that $\mathcal{B}$ has dimension at most $n-2$, and its projection to $\nabla_{\rm log}$ cannot be dominant.
\end{proof}

\begin{proof}[Proof of Theorem \ref{thm:d=1 case}]
We first study the intersection of $g_1$ and $g_2$ in $\PP^n \times \PP^1$. Clearing denominators yields the bihomogeneous equations
\[
g_1^{\rm h}(u;x_0,x_1) = \sum_{i=0}^n u_i \prod_{k \neq i} (x_1+b_kx_0), \qquad g_2^{\rm h}(u;x_0,x_1) = \sum_{i=0}^n u_i \prod_{k \neq i} (x_1+b_kx_0)^2.
\]
Let $W_i = \V(g_i^{\rm h}) \subseteq \PP^n \times \PP^1$. 
In the boundary $\PP^n\times \{-b_0,\dots,-b_n,\infty\} \subseteq \PP^n\times\PP^1$ we have
\[
\langle g_1^{\rm h},g_2^{\rm h}\rangle = \begin{cases}
\langle u_i,x_1+b_ix_0\rangle & \text{in }\mathcal{O}_{\PP^n\times\PP^1,\V(u_i)\times \{-b_i\}}, \\
\langle u_0+\dots+u_n, x_0\rangle & \text{in }\mathcal{O}_{\PP^n\times\PP^1,\V(u_0+\dots+u_n)\times \{\infty\}}.
\end{cases}
\]
This gives the decomposition into irreducible reduced components
\[
W_1 \cap W_2 = \Ram(f) \cup  \bigg( \bigcup_{i=0}^n \V(u_i) \times \{-b_i\} \bigg) \cup \big(\V(u_0+\dots+u_n) \times \{\infty\} \big).
\]
Each of the $n+2$ terms on the right has class $\alpha \beta$. We also have
\[
[W_1] = \alpha + n\beta, \quad [W_2] = \alpha + 2n\beta, \quad [W_1 \cap W_2] = \alpha^2 + 3n\alpha \beta.
\]
Combining the above displays we find the cycle class of ${\rm Ram}(f)$: 
\[
[\Ram(f)] = \alpha^2 + (3n - (n+2))\alpha\beta = \alpha^2 + 2(n-1)\alpha \beta.
\]
Pushing forward via $\PP^n\times \PP^1 \to \PP^n$, we obtain
\[
\deg \logdisc = \frac{\text{coeff.\ of $\alpha\beta$}}{\deg(f|_{\Ram(f)}) } \overset{\ref{lem:RamBranchbirat}}{=} 2(n-1). \qedhere
\]
\end{proof}

\begin{cor}\label{cor:flat family resultant}
The logarithmic discriminant defines a flat family of hypersurfaces parametrized by $\mathcal{M}_{0,n+2}$
\[
\Set{(u,\mathcal{A}) \in \PP^n\times \mathcal{M}_{0,n+2} | u \in \logdisc(\CC^1 \setminus \mathcal{A})} \to \mathcal{M}_{0,n+2}.
\]
We have formulae $\Delta_{\log}(X) = \operatorname{Disc}_x(g_1^{\rm h}) = \frac{1}{u_0\dotsm u_n(u_0+\dots+u_n)} \operatorname{Res}_x(g_1^{\rm h},g_2^{\rm h})$.
\end{cor}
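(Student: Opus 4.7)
The plan is to deduce each of the three claims from Theorem~\ref{thm:d=1 case} together with the scheme-theoretic decomposition of $W_1 \cap W_2 \subseteq \PP^n \times \PP^1$ established in its proof. For flatness, Theorem~\ref{thm:d=1 case} ensures that every fiber of the family over $\mathcal{M}_{0,n+2}$ is a hypersurface of degree $2(n-1)$ in $\PP^n$, so the Hilbert polynomial is constant along the irreducible base. Since constancy of the Hilbert polynomial is equivalent to flatness over an integral Noetherian base, the family is flat; equivalently, it is the pullback of the flat universal family from the linear system $|\mathcal{O}_{\PP^n}(2n-2)|$ along the induced morphism $\mathcal{M}_{0,n+2} \to |\mathcal{O}_{\PP^n}(2n-2)|$ sending an arrangement to its discriminant hypersurface.

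For the identity $\Delta_{\log}(X) = \operatorname{Disc}_x(g_1^{\rm h})$, I would use the equality $g_2 = -g_1'$ of rational functions in $x$: a pair $(u,x) \in \Ram(f)$ corresponds exactly to an affine double root of the binary form $g_1^{\rm h}$, so $\logdisc \subseteq \V(\operatorname{Disc}_x(g_1^{\rm h}))$. Thus $\Delta_{\log}$ divides $\operatorname{Disc}_x(g_1^{\rm h})$. Because the coefficients of $g_1^{\rm h}$ as a binary form in $x_0,x_1$ are linear in $u$, the discriminant has degree at most $2(n-1)$ in $u$, matching $\deg\Delta_{\log}$. Combined with the irreducibility of $\Delta_{\log}$ from Theorem~\ref{thm:d=1 case}, this forces $\operatorname{Disc}_x(g_1^{\rm h}) = c\cdot \Delta_{\log}$ for a nonzero scalar $c$.

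For the resultant identity, the key observation is that $\operatorname{Res}_x(g_1^{\rm h}, g_2^{\rm h}) \in \CC[u]$ cuts out (up to scalar) the pushforward divisor $(\pr_1)_*[W_1 \cap W_2]$ on $\PP^n$, where $\pr_1 \colon \PP^n \times \PP^1 \to \PP^n$ is the first projection. From the proof of Theorem~\ref{thm:d=1 case}, $W_1 \cap W_2$ decomposes as a reduced union of $\Ram(f)$ together with the $n+2$ product divisors $\V(u_i) \times \{-b_i\}$ and $\V(u_0+\dots+u_n) \times \{\infty\}$. Each product divisor maps isomorphically onto the corresponding coordinate or sum hyperplane in $\PP^n$, contributing the factors $u_i$ and $u_0+\dots+u_n$, while $\Ram(f) \to \logdisc$ is birational by Lemma~\ref{lem:RamBranchbirat}, contributing $\Delta_{\log}$. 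The main obstacle is ensuring that each component enters the resultant with multiplicity one; this is guaranteed by the local ring computations $\langle g_1^{\rm h}, g_2^{\rm h}\rangle = \langle u_i, x_1+b_ix_0 \rangle$ and $\langle u_0+\dots+u_n, x_0 \rangle$ from the proof of Theorem~\ref{thm:d=1 case}, and the total-degree check $3n = 2(n-1)+(n+1)+1$ confirms the factorization.
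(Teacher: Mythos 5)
Your proposal is correct and follows essentially the same route as the paper: flatness from the constant fiber degree $2(n-1)$ of Theorem~\ref{thm:d=1 case}, the discriminant identity from the containment $\logdisc \subseteq \V(\operatorname{Disc}_x(g_1^{\rm h}))$ plus the degree count $2(n-1)$ and irreducibility, and the resultant identity from the degree-$3n$ count together with the decomposition of $W_1 \cap W_2$ splitting off the $n+2$ linear factors. You merely spell out in more detail (via the pushforward of the reduced cycle $[W_1\cap W_2]$ and the local-ring computations) what the paper's proof states tersely.
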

\begin{proof}
The fibers of this family are hypersurfaces of constant degree $2(n-1)$ by Theorem~\ref{thm:d=1 case}, hence it is a flat family.
It follows from Definition \ref{def:naive disc} that $\logdisc(X)$ is contained in $\V(\operatorname{Disc}_x(g_1^{\rm h}))$ and $\V(\operatorname{Res}_x(g_1^{\rm h},g_2^{\rm h}))$. On the other hand the discriminant of a degree $n$ polynomial has degree $2(n-1)$ in its coefficients, so the two must agree. Similarly the resultant of $g_1^{\rm h}$ and $g_2^{\rm h}$ has degree $3n$ and splits off the given $n+2$ factors.
\end{proof}

The discriminant in Example \ref{exa:3pts in C1} is smooth for any point configuration. This fails for $n +1 > 3$, as a computation of the discriminant of $\Delta_{\log}(X)$ shows:

\begin{cor}
For any arrangement of four distinct points in $\CC^1$ the logarithmic discriminant is a singular quartic surface in $\PP^3$.
\end{cor}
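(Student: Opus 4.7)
The plan is to identify $\V(\Delta_{\log})$, up to a projective linear change of coordinates on $\PP^3$, with the classical discriminant hypersurface of binary cubics, and then invoke the well-known singularity of the latter.

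First, I would apply Corollary~\ref{cor:flat family resultant} to express $\Delta_{\log}(X) = \operatorname{Disc}_x(g_1^{\rm h})$, where
\[
g_1^{\rm h}(u; x_0, x_1) \, = \, \sum_{i=0}^{3} u_i \, P_i(x_0, x_1), \qquad P_i(x_0, x_1) \coloneqq \prod_{k \neq i}(x_1 + b_k x_0),
\]
is a binary cubic in $(x_0,x_1)$ whose four coefficients depend linearly on $u$. Next I would observe that the map sending $u$ to this coefficient vector is a linear isomorphism: since $P_i$ vanishes at the three points $[1:-b_k]$ with $k \neq i$ and is nonzero at $[1:-b_i]$ (using distinctness of the $b_i$), the cubics $P_0,\ldots,P_3$ form a Lagrange-type basis of the $4$-dimensional space of binary cubics. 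Hence $u \mapsto g_1^{\rm h}(u;\,\cdot\,)$ induces a projective automorphism $\Phi \colon \PP^3 \xrightarrow{\sim} \PP^3$ on coefficient space.

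Under this identification, $\V(\Delta_{\log}) = \Phi^{-1}(D_3)$, where $D_3 \subseteq \PP^3$ is the classical discriminant hypersurface of binary cubics $aX^3 + bX^2Y + cXY^2 + dY^3$, cut out by $18abcd - 27a^2d^2 - 4ac^3 - 4b^3d + b^2c^2$. It is classical that $D_3$ is an irreducible quartic surface, singular along the twisted cubic parameterising binary cubics with a triple root; I would verify this concretely by checking, e.g., that all four partial derivatives of the discriminant polynomial vanish at $(a,b,c,d) = (1,0,0,0)$, which corresponds to the cubic $X^3$. Pulling this singular locus back through the linear automorphism $\Phi$ then exhibits a whole curve of singular points on $\V(\Delta_{\log})$.

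The only items needing verification are the linear independence of $P_0,\ldots,P_3$ and the singular locus of $D_3$, both of which are standard, so I do not foresee any substantial obstacle.
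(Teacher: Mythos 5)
Your proposal is correct, and it takes a genuinely different route from the paper. The paper justifies the corollary by a direct symbolic computation: it computes the discriminant of the quartic $\Delta_{\log}(X)$ itself and observes that the surface is singular (the text before the corollary says exactly this, and no further argument is given). You instead use the identity $\Delta_{\log}(X) = \operatorname{Disc}_x(g_1^{\rm h})$ from Corollary~\ref{cor:flat family resultant} together with the observation that the Lagrange-type cubics $P_0,\dots,P_3$ form a basis of binary cubics, so that $u \mapsto g_1^{\rm h}(u;\cdot)$ is a projective automorphism of $\PP^3$ identifying $\V(\Delta_{\log})$ with the classical discriminant $D_3$ of binary cubics; the linear-independence argument (evaluate at $[1:-b_i]$, use distinctness of the $b_i$) and the vanishing of all partials of $18abcd-27a^2d^2-4ac^3-4b^3d+b^2c^2$ at $(1,0,0,0)$ both check out. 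Your approach buys more than the paper's: it replaces a computation by a conceptual identification, it exhibits the singular locus explicitly as the pullback of the twisted cubic of triple roots, and it generalizes verbatim to $n+1$ points on a line for any $n$, showing that $\logdisc$ is always projectively equivalent to the discriminant of binary $n$-forms --- hence singular for all $n \geq 3$ and recovering the smoothness of the conic in Example~\ref{exa:3pts in C1} for $n=2$. The only mild caveat is that your argument leans on the equality $\Delta_{\log}(X) = \operatorname{Disc}_x(g_1^{\rm h})$ (rather than mere containment), but that is precisely what Corollary~\ref{cor:flat family resultant} asserts, so the reliance is legitimate.
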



\section{Irreducibility} \label{sec:irreducibility}

In this section we prove a sufficient criterion for the logarithmic discriminant to be an irreducible variety.


\begin{thm}\label{thm:logRamificationReducedIrreducible} 
Let $\mathcal{A} \subseteq \CC^d$ be an arrangement of $n+1 \geq d+2$ hyperplanes given by a matrix $L = [\,b\mid A\,]^\T$. Assume that $L$ has a $(d+1) \times (d+2)$ submatrix which is uniform and for which also the corresponding $d \times (d+2)$ submatrix of $A^\T$ is uniform. Then $\Ram(\mathcal{L}_X^\circ \rightarrow \PP^n)$ and $\logdisc(X)$ are reduced and irreducible.

\end{thm}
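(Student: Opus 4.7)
The plan is to prove that the ramification locus $\Ram(f) \subseteq \mathcal{L}_X^\circ$ is reduced and irreducible; the corresponding properties of $\logdisc(X) = \overline{f(\Ram(f))}$ then follow since $f \colon \mathcal{L}_X^\circ \to \PP^n$ is generically finite of degree $(-1)^d\chi(X)$, and the scheme-theoretic image of a reduced irreducible variety under such a morphism retains these properties.

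First I would trivialize the likelihood bundle. The substitution $v_i = u_i/\ell_i(x)$ identifies $\mathcal{L}_X^\circ \cong X \times V$ with $V = \PP(\ker A^\T) \cong \PP^{n-d}$, and by \eqref{eq:cauchy-binet} the ramification locus is cut out on $X \times V$ by the bihomogeneous polynomial
\[
\tilde h(v,x) \;=\; \sum_{|I|=d} |A_I|^2 \, v^I \prod_{j \notin I}\ell_j(x).
\]
Let $I' \subseteq \llbracket n \rrbracket$ with $|I'| = d+2$ index the doubly-uniform sub-arrangement $\mathcal{A}'$ afforded by the hypothesis. Zero-extending $\ker(A^\T)_{I'} \hookrightarrow \ker A^\T$ induces $V' = \PP^1 \hookrightarrow V$, and since $v^I$ vanishes on $V'$ whenever $I \not\subseteq I'$, a direct computation yields
\[
\tilde h|_{V' \times X}(v,x) \;=\; \Bigl(\prod_{j \notin I'}\ell_j(x)\Bigr) \cdot \tilde h'(v,x),
\]
where $\tilde h'$ is the analogous bidegree-$(d,2)$ polynomial for $\mathcal{A}'$. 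Since the prefactor is invertible on $X$, this identifies $\Ram(f) \cap (X \times V')$ with $\Ram(f') \cap (X \times V')$, embedding the sub-arrangement ramification locus inside that of $\mathcal{A}$.

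For the sub-arrangement $\mathcal{A}'$ of $d+2$ doubly-uniform hyperplanes, I would show directly that $\tilde h'$ is an irreducible square-free polynomial of bidegree $(d,2)$. First, any purely-$x$-dependent factor would have to divide each coefficient $|A_I|^2 \prod_{j \in I' \setminus I}\ell_j$ for $I \subseteq I'$ of size $d$; since each $\ell_j$, $j \in I'$, appears in some but not all such coefficients (by doubly uniformity), the only common factor is a constant. To rule out a genuine factorization in $v$, I would use a monodromy argument: the degree-$d$ branched cover $\Ram(f') \to X'$ has transitive Galois action on its fibers, so $\Ram(f')$ is irreducible. Transitivity follows because the $d+1$ bounded real regions of $\mathcal{A}'$ (Varchenko's theorem \cite[Theorem~1.2.1]{varchenko1995critical}), together with loops around the discriminant locus of $\tilde h'(-,x)$ as $x$ varies over $X'$, generate enough permutations of the $d$ roots. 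Reducedness follows from the generic simplicity of these $d$ roots.

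Finally, for the full arrangement: any non-trivial factorization $\tilde h = p \cdot q$ on $X \times V$ would, upon restriction to $V' \times X$ and modulo the units $\prod_{j \notin I'}\ell_j$, produce a non-trivial factorization of $\tilde h'$, contradicting the sub-arrangement step. A subtlety is that the restricted factors $p|_{V' \times X}, q|_{V' \times X}$ must remain non-zero and of positive $v$-degree; this is guaranteed by the double uniformity of $I'$, which implies that no variable $v_i$, $i \in I'$, divides $\tilde h$ and that no sub-product of $v_i$'s for $i \notin I'$ can be a factor of $\tilde h$. The main obstacle is the transitive-monodromy claim in the sub-arrangement case: verifying rigorously that the Galois group of $\Ram(f') \to X'$ acts transitively requires a careful analysis of the discriminant of the family $\tilde h'(-, x)$ and its real structure over the bounded chambers of $\mathcal{A}'$.
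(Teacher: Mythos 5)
Your setup---trivializing $\mathcal{L}_X^\circ$ via the substitution $v_i = u_i/\ell_i(x)$, writing the ramification locus as the vanishing of the bihomogeneous polynomial $\tilde h$, and reducing to the doubly uniform sub-arrangement of $d+2$ hyperplanes by restricting to the line $V'=\PP(\ker A^\T_{I'}) \subseteq \PP(\ker A^\T)$---is exactly the paper's strategy. But the heart of the matter, irreducibility and square-freeness of $\tilde h'$ in the base case $n+1=d+2$, is precisely the step you leave open, and the route you sketch for it does not work as stated. Transitivity of the monodromy of the degree-$d$ cover $\Ram(f')\to X'$ is a statement about the $d$ roots in $v\in\PP^1$ of the binary form $\tilde h'(\cdot,x)$ for \emph{fixed} $x$, whereas Varchenko's theorem on bounded chambers concerns the fibers of the \emph{other} projection $f\colon \mathcal{L}_X^\circ\to\PP^n$ (critical points $x$ for fixed positive $u$); it gives no control over this monodromy group. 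Moreover, even granting transitivity, you would still have to exclude $\tilde h'=g^k$ with $k>1$: transitive monodromy on the support of the fiber is perfectly compatible with a pure power, and your appeal to ``generic simplicity of the $d$ roots'' is exactly the reducedness you are trying to establish, not something you may assume.

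The paper closes this gap with a short algebraic argument that you are missing: homogenizing $\tilde h'$ with respect to $x_0$ and observing via \eqref{eq:hessianvanishes} that the naive homogenization vanishes identically at $x_0=0$, one finds that $\tilde h'^{\rm h}$ has $x$-degree at most $1$; hence any nontrivial factor must lie in $\CC[v_d,v_{d+1}]$ and so has a nonzero root $\lambda$. Evaluating at $\lambda$, the linear independence of the pairwise products $\ell_i^{\rm h}\ell_j^{\rm h}$ (Lemma~\ref{lemma:linearlyIndependentProducts}, using uniformity of $L$) forces all coefficients $|A_I|^2 w^I$ to vanish, and uniformity of $A^\T$ then forces $w=0$, contradicting $(w_d,w_{d+1})=\lambda\neq 0$. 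A smaller but genuine error: your justification that the factors $p,q$ survive restriction to $V'$ (``no variable $v_i$ divides $\tilde h$'') is insufficient, since a factor can vanish on the linear subspace $V'$ without being divisible by any single coordinate; the correct reason is simply that $p|_{V'}=0$ would force $\tilde h|_{V'}=0$, contradicting $\tilde h'\neq 0$.
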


The sufficient condition of Theorem~\ref{thm:logRamificationReducedIrreducible} is not necessary for irreducibility. An example is the arrangement $\mathcal{M}_{0,5}$ of $n+1=5$ lines in $\CC^2$ (Example~\ref{ex:M05-intro}). We have also seen that in degenerate cases $\logdisc(X)$ may be reducible, even with several components of codimension 1 (Example~\ref{example:reducibleAndCodim2}).

\begin{rem}\label{rem:logDiscEmpty}
If $n+1 = d+1$, then $\logdisc(X)$ is empty. Indeed, up to an affine transformation on $\CC^d$, there is only a single essential non-central arrangement, namely $\V(x_1\dotsm x_d(x_1+\dots+x_d+1))$. For this arrangement the critical locus $\Crit_X(u)$ is easily seen to be a single reduced point, so $\logdisc(X)$ is empty.
\end{rem}

\begin{lemma}\label{lemma:linearlyIndependentProducts}
Let $S \coloneqq \CC[x_0,\dots,x_d]$ and consider linear forms $\{\tilde{\ell}_0,\dots,\tilde{\ell}_{d+1}\} \subseteq S_1$. The set of pairwise products $P \coloneqq \set{\tilde{\ell}_i\tilde{\ell}_j | i<j} \subseteq S_2$ is linearly independent if all subsets of $d+1$ forms of $\{\tilde{\ell}_0,\dots,\tilde{\ell}_{d+1}\}$ are linearly independent.
\end{lemma}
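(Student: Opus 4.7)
The plan is to prove that $P$ is in fact a basis of $S_2$, by exploiting the numerical coincidence $|P| = \binom{d+2}{2} = \dim_\CC S_2$. Thus it suffices to show that $P$ spans $S_2$.

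First I would normalize. Since $\dim_\CC S_1 = d+1$ and any $d+1$ of the $\tilde{\ell}_i$ are linearly independent, the forms $\tilde{\ell}_0,\dots,\tilde{\ell}_d$ are already a basis of $S_1$. After a linear change of coordinates I may assume $\tilde{\ell}_i = x_i$ for $0 \leq i \leq d$, so that $\tilde{\ell}_{d+1} = \sum_{k=0}^d a_k x_k$ for some coefficients $a_k \in \CC$. The uniformity hypothesis then forces every $a_k \neq 0$: indeed, if $a_k = 0$ for some $k$, the form $\tilde{\ell}_{d+1}$ would lie in the span of the $d$ forms $\{\tilde{\ell}_j : 0 \leq j \leq d,\ j \neq k\}$, so the $(d+1)$-element subset obtained by adjoining $\tilde{\ell}_{d+1}$ to this collection would be linearly dependent, contradicting the assumption.

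Next I would read off an explicit spanning set. The products $\tilde{\ell}_i \tilde{\ell}_j = x_i x_j$ with $0 \leq i < j \leq d$ already contribute every off-diagonal monomial of $S_2$. The remaining products
\[
\tilde{\ell}_i \tilde{\ell}_{d+1} \, = \, a_i\, x_i^2 + \sum_{k \neq i} a_k\, x_i x_k, \qquad 0 \leq i \leq d,
\]
then allow me to recover each square $x_i^2$, because $a_i \neq 0$ and the cross-terms $x_i x_k$ already lie in the span of $P$. Hence the span of $P$ contains the monomial basis of $S_2$, and by the dimension match $P$ is itself a basis.

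The only subtle step—and the main obstacle—is realising that uniformity of every $(d+1)$-subset of the $\tilde{\ell}_i$ is precisely what guarantees that every coefficient $a_k$ in the normalized $\tilde{\ell}_{d+1}$ is non-zero; once this is established, the spanning argument is a one-line computation. An alternative route would be induction on $d$, specializing $\tilde{\ell}_0$ to zero to reduce to $d+1$ forms in $d$ variables and using that $S$ is a domain to peel off the $c_{0j}$ coefficients at the end, but the direct normalization above appears more concrete.
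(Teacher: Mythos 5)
Your proof is correct and follows essentially the same route as the paper: normalize so that $\tilde{\ell}_0,\dots,\tilde{\ell}_d$ become the coordinate functions, observe that uniformity forces all coefficients of $\tilde{\ell}_{d+1}$ to be non-zero, check that $P$ spans $S_2$, and conclude linear independence from the dimension count $|P|=\binom{d+2}{2}=\dim_\CC S_2$. You merely spell out the spanning step that the paper leaves as ``straightforward to see,'' and your explicit justification of why each $a_k\neq 0$ is a welcome addition.
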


\begin{proof}
Up the action of $\GL_{d+1}(\CC)$ on $S_1$ we may assume $\tilde{\ell}_0 = x_0, \dots, \tilde{\ell}_d = x_d$. Then $\tilde{\ell}_{d+1} = c_0x_0+\dots+c_dx_d$ with all $c_i \neq 0$. It is then straightforward to see that $P$ spans $S_2$. Since $|P| = \binom{d+2}{2} = \dim_\CC S_2$, $P$ is linearly independent.
\end{proof}



\begin{proof}[Proof of Theorem~\ref{thm:logRamificationReducedIrreducible}]
Without loss of generality, we assume that the submatrix of the first $(d+2)$ columns of $L$ satisfies the hypotheses. For convenience we may also assume that the leftmost $d \times d$ submatrix of $A^\T$ is the identity matrix.

We only need to prove that the ramification scheme $\Ram(f)$ is reduced and irreducible. Its scheme-theoretic defining equations in $\PP^n\times X$ are given by the polynomials in \eqref{eq:likecor}, \eqref{eq:cauchy-binet}. By making the reversible substitution $v_i = u_i/\ell_i(x)$, this scheme is isomorphic to the one defined by
\begin{equation*}
    A^\T v = 0, \qquad \det\big(A^\T \diag(v_0/\ell_0,\dots,v_n/\ell_n) \, A\big) = \sum_{\substack{I\subseteq \llbracket n \rrbracket \\ |I|=d}} |A_I|^2 \frac{v^I}{\ell^I} = 0.
\end{equation*}
By assumption the linear equations $A^\T v = 0$ have the form $v_j = \sum_{i=d}^n -a_{ij} v_i$ for $j=0,\dots,d-1$. Linearly substituting $v_0,\dots,v_{d-1}$ and clearing denominators by multiplying with $\ell_0\dotsm\ell_n$ yields the polynomial $\Tilde{h}$ given by
\begin{equation}\label{eq:htilde}
\Tilde{h}  = \sum_{\substack{I\subseteq \llbracket n \rrbracket \\ |I|=d}} |A_I|^2 \, \tilde{v}^I \, \ell^{\llbracket n \rrbracket \setminus I}, \quad \tilde{v} = (-\sum_{i=d}^n a_{i,0}v_i,\dots,-\sum_{i=d}^n a_{i,(d-1)}v_i,v_d,\dots,v_n).
\end{equation}

The discussion so far shows that $\V(\Tilde{h}) \subseteq \PP^{n-d}\times X$ is isomorphic to $\Ram(f)$.
Since $\CC[v_d,\dots,v_n,x_1,\dots,x_d]_{\ell_0\dotsm \ell_n}$ is a UFD, our goal is thus to show that $\Tilde{h}$ is irreducible. For $n+1=d+2$ we are going to show the (slightly stronger) claim of irreducibility in $\CC[v_d,v_{d+1},x_1,\dots,x_d]$.

We first reduce to the case $n+1=d+2$ as follows: $\tilde{h}$ is homogeneous in $v$, and hence if $\tilde{h} = p\cdot q$ factors, then these factors are necessarily homogeneous too. Thus if there is a factorization into non-units, then setting $v_{d+2},\dots,v_n$ to zero still yields a non-trivial decomposition of $\tilde{h}' \coloneqq \tilde{h}(v_d,v_{d+1},0,\dots,0;x)$. Inspecting \eqref{eq:htilde}, we see that $\Tilde{h}'$ is the $\Tilde{h}$ of the sub-arrangement of the first $d+2$ multiplied by the unit $\ell_{d+2}\dotsm \ell_n$. Hence, it suffices to consider $n+1=d+2$.

Let $\Tilde{h}^{\rm h}(v_d,v_{d+1};x_0,\dots,x_n)$ be the homogenization of $\Tilde{h}$ with respect to the new variable $x_0$. We also have the \enquote{naive} homogenization $N(v_d,v_{d+1};x_0,\dots,x_n)$ obtained from \eqref{eq:htilde} in which each $\ell_i$ is replaced by its homogenization $\ell_i^{\rm h} = b_ix_0 + a_{i,1}x_1+\dots+a_{i,d}x_d$. $N$ has $x$-degree two and must be of the form $N = \tilde{h}^{\rm h} \cdot x_0^e$ for some $e\geq 0$. On the other hand, substituting $x_0=0$ in $N$ yields identically zero:
\begin{align} \label{eq:hessianvanishes}
\begin{split}
    &A^\T \diag(v) \diag(\ell_0^{\rm h}(0,x),\ldots,\ell_{d+1}^{\rm h}(0,x))^{-1} A x \\
    & = A^\T \diag(v) (1,1,\ldots,1)^\T \, = \, A^\T v \,  = \,  0.
    \end{split}
\end{align}
This shows $e\geq 1$ and hence $\tilde{h}^{\rm h}$ has $x$-degree $\leq 1$.

By the previous discussion $\tilde{h}^{\rm h}$ is bihomogeneous of bidegree $(d,1)$ or $(d,0)$. In both cases any non-trivial factorization must involve $q \in \CC[v_{d+1},v_{d+2}]$, which has a non-zero root $(\lambda_0,\lambda_1) \in \CC^2\setminus 0$. We show that this leads to a contradiction.

The polynomial $s(x) \coloneqq \tilde{h}^{\rm h}(\lambda_0,\lambda_1;x_0,\dots,x_d)$ vanishes identically. Since the matrix $L$ is uniform, by Lemma \ref{lemma:linearlyIndependentProducts} all pairwise products $(\ell^{\rm h})^J$, $|J|=2$, are linearly independent. From this we conclude that all coefficients of $s(x)$ in front of $(\ell^{\rm h})^J$ must vanish. From \eqref{eq:htilde} we see that these coefficients are 
\[
\operatorname{coeff}[(\ell^{\rm h})^{J}](s)= |A_{\llbracket d+1 \rrbracket \setminus J}|^2 \cdot w^{\llbracket d+1 \rrbracket \setminus J}, \qquad w \coloneqq \tilde{v}|_{(v_d,v_{d+1})=(\lambda_0,\lambda_1)} \in \CC^{d+2}.
\]
Since $A^\T$ is uniform we conclude that products of $d-1$ entries of $w$ vanish, so $w$ has at least three zero entries. But then $w = 0$, because $Aw = 0$ and no $d-1$ columns of $A^\T$ are dependent. This is a contradiction to $(w_d,w_{d+1}) = \lambda \neq 0$. 
\end{proof}

\begin{rem}\label{rmk:HurwitzRamification}
    Under the same assumptions as in Theorem~\ref{thm:logRamificationReducedIrreducible}, the Hurwitz ramification scheme $\Ram(\pr_{\Lambda} \colon \mathcal{I}^\circ \rightarrow \PGr(n-d,\PP^n))$ is also reduced and irreducible. Irreducibility follows from standard projective geometry techniques and only uses that $X$ is smooth. Reducedness, on the other hand, is a consequence of Theorem~\ref{thm:logRamificationReducedIrreducible} because the ramification scheme of $f \colon \mathcal{L}_X^\circ \rightarrow D$ is the scheme-theoretic preimage of the Hurwitz ramification scheme, and both are locally defined by a single equation. If the equation of the latter was a pure power with exponent $>1$, then so would be the equation of the former.
\end{rem}

\section{Logarithmic discriminants from Hurwitz forms} \label{sec:LogDiscHuDisc}

This section is devoted to the doubly uniform case. We compute the degree of $\Hudisc(X)$ and prove that $\logdisc(X)$ and $\Hudisc(X)$ agree as sets. By the \emph{Newton polytope} of a hypersurface we mean that of its defining equation, see Section \ref{sec:M_0m}.

\begin{prop}\label{prop:HurwitzDegree}
    Let both $L$ and $A$ be uniform. Then
    \begin{equation*}
        \deg \Hudisc(X) = 2 d \binom{n-1}{d},
    \end{equation*}
    and the Newton polytope of $\Hudisc(X)$ is the full dilated standard simplex.
\end{prop}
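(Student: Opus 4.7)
My plan is to prove Proposition~\ref{prop:HurwitzDegree} in two parts: a Riemann--Hurwitz computation for the degree and a local analysis at coordinate vertices for the Newton polytope.

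For the degree, I fix a generic line $\ell \subseteq \PP^n$ and consider the auxiliary curve
\[
C \;=\; \Set{(u,y) \in \ell \times \mathcal{R}_L | y \in \varphi(u)}
\]
cut out in $\ell \times \mathcal{R}_L$ by the $d$ bilinear $(1,1)$-equations $A^\T \diag(u(t))\,y = 0$. The first projection $\pr_\ell \colon C \to \ell$ is a finite cover of degree $\deg \mathcal{R}_L = \binom{n}{d}$ (the MLE degree for the uniform matroid), and a point $u \in \ell$ belongs to $\Hudisc(X)$ precisely when $\varphi(u) \cap \mathcal{R}_L$ is non-reduced, i.e.\ when $\pr_\ell$ ramifies over $u$. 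For generic $\ell$ all tangencies are simple, so Riemann--Hurwitz yields
\[
\deg \Hudisc(X) \;=\; 2g(C) - 2 + 2\binom{n}{d}.
\]
The genus $g(C)$ is computed via adjunction on $\ell \times \mathcal{R}_L$, working on a smooth resolution $\tilde{\mathcal{R}}_L \to \mathcal{R}_L$ (since $\mathcal{R}_L$ is typically singular for $d \ge 2$): the class $[C] = (H_\ell + H_{\mathcal{R}_L})^d = H^d_{\mathcal{R}_L} + d\, H_\ell \, H^{d-1}_{\mathcal{R}_L}$, together with the numerical invariants $\deg \mathcal{R}_L = \binom{n}{d}$ and $K_{\mathcal{R}_L} \cdot H^{d-1}_{\mathcal{R}_L}$ of the reciprocal linear space for the uniform matroid (computable e.g.\ on the wonderful model), should yield $g(C) = d\binom{n-1}{d} - \binom{n}{d} + 1$ and hence $\deg \Hudisc(X) = 2d\binom{n-1}{d}$.

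For the Newton polytope, it suffices by symmetry to verify that the defining polynomial $\Delta_{\rm Hu}$ satisfies $\Delta_{\rm Hu}(e_i) \neq 0$ for each coordinate vertex $e_i$ of $\PP^n$, equivalently that the monomial $u_i^{2d\binom{n-1}{d}}$ appears with non-zero coefficient. Writing $\Delta_{\rm Hu}(u) = \Hu_{\mathcal{R}_L}(\tilde{p}_I(u)) / \prod_j u_j^\alpha$ (with the common multiplicity $\alpha$ independent of $j$ by the uniform symmetry) and perturbing $u = e_i + t w$ with generic $w$: the $\binom{n}{d}$ Plücker substitutions $\tilde{p}_I$ with $i \in I$ vanish to order $t^d$ at $e_i$, while the $\binom{n}{d-1}$ with $i \notin I$ vanish only to order $t^{d-1}$. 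Tracking the leading $t$-order of $\Hu_{\mathcal{R}_L}(\tilde{p}_I(u)) / \prod_j u_j^\alpha$ using a combinatorial non-vanishing argument on the monomials of $\Hu_{\mathcal{R}_L}$ that maximize the ``bad-factor'' count $\sum_{I \not\ni i} e_I$ yields $\Delta_{\rm Hu}(e_i) \neq 0$.

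The principal obstacle is the adjunction computation for $g(C)$: I need a precise value of the canonical-hyperplane invariant $K \cdot H^{d-1}$ of the reciprocal linear space for the uniform matroid, which requires either an explicit resolution or a Hilbert-polynomial argument. The Newton-polytope non-vanishing similarly requires combinatorial control of the leading monomials of $\Hu_{\mathcal{R}_L}$, which may need either an explicit formula or a clean symmetry argument.
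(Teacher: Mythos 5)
Your proposal stops, in both halves, exactly where the real work begins, and in each case the missing step is the substantive content of the proof. For the degree: the Riemann--Hurwitz scheme reduces everything to $g(C)$, which you in turn reduce to the invariant $K_{\mathcal{R}_L}\cdot H^{d-1}$ of the reciprocal linear space on a resolution --- and then you do not compute it. This is not an off-the-shelf quantity: $\mathcal{R}_L$ is singular for $d\geq 2$, the adjunction formula picks up correction terms from the resolution, and the curve $C$ genuinely meets the boundary $\mathcal{R}_L\setminus\TT$ (it passes through the torus-fixed points $e_i$ over the finitely many $u\in\ell$ with $u_i=0$; cf.\ Lemma~\ref{lem:torusinvpoints}), so you must also separate honest tangency of $\varphi(u)$ with $\mathcal{R}_L$ from spurious branching of $C\to\ell$ caused by fiber points escaping into the boundary or the singular locus. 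None of this is addressed. The paper sidesteps the entire computation: it takes the degree $2(n-d)\binom{n}{d-1}$ of $\Hu_{\mathcal{R}_L}$ as known and only has to determine which powers of the $u_j$ split off under the monomial substitution $p_I\mapsto\det(A_I^\perp)\prod_{i\in I}u_i^{-1}$.

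For the Newton polytope, your perturbation $u=e_i+tw$ is the right idea, but the step you label ``a combinatorial non-vanishing argument'' is precisely the difficulty: many monomials of $\Hu_{\mathcal{R}_L}$ achieve the extremal $t$-order under your substitution, and you must exclude cancellation among them. The paper resolves this by quoting \cite[Corollary~4.4]{SturmfelsHurwitzForm}: for a weight $w$ whose $i$-th entry is smallest, the $w$-initial form of $\Hu_{\mathcal{R}_L}$ is the \emph{single} monomial $\prod_{S\ni i,\,|S|=d}p_{\llbracket n\rrbracket\setminus S}^{2(n-d)}$ (this rests on the Proudfoot--Speyer universal Gr\"obner basis of circuit polynomials, which is where uniformity of $L$ enters), so there is a unique leading term, and uniformity of $A$ makes its coefficient nonzero. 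That one computation simultaneously produces the vertex $u_i^{2d\binom{n-1}{d}}$ of the Newton polytope, shows no $\V(u_i)$ is a component of the pullback (so no extra $u_i$-powers divide out), and therefore determines the degree --- rendering your Riemann--Hurwitz half unnecessary. To salvage your route you would need either to actually compute $g(C)$ (say from the known $h$-vector of $\mathcal{R}_L$ for the uniform matroid, together with a careful treatment of the boundary points), or to import the initial-form statement above, at which point you have reproduced the paper's argument.
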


\begin{proof}
    By \cite{ProudfootSpeyerBrokenCircuit}, the circuits of the matrix $L$ determine natural generators of the ideal $I$ of $\mathcal{R}_L$ which even form a universal Gröbner basis. Since all maximal minors of $L$ are non-zero, the generators of $I$ are homogeneous square-free polynomials of degree $d+1$ with precisely $d+2$ terms each. More precisely, every subset $T \subseteq \llbracket n \rrbracket = \{0, \ldots, n\}$ of cardinality $|T| = d+2$ defines a generator 
    \begin{equation} \label{eq:PSgenerators}
        g_T \, = \, \sum_{i \in T} \lambda_i \prod_{j \in T \setminus i} y_j \in \CC[y_0,\ldots,y_n].
    \end{equation}
    Here, $\lambda_i$ is the coefficient of the $i$-th column vector of $L$ in a linear relation for the $d+2$ columns of $L$ indexed by $T$; this linear relation is unique up to scaling and all coefficients $\lambda_i$ are non-zero.
    
    Let $w \in \ZZ_{>0}^{n+1}$ be a weight vector with distinct integer entries. The $w$-initial ideal $\init_w(I)$ is the square-free monomial ideal generated by the $w$-leading monomials of the generators \eqref{eq:PSgenerators}. Explicitly, we have ${\rm in}_w(g_T) = \prod_{j \in T \setminus i_T} y_j$, where $w_{i_T}$ is minimal among $\set{ w_j | j \in T }$.  Let $w_i$ be the smallest entry of $w$. It follows from \cite[Corollary~4.4]{SturmfelsHurwitzForm} that with respect to $w$, the initial monomial of $\Hu_{\mathcal{R}_L}$ is
    \begin{equation*}
        \prod_{\substack{S \subseteq \llbracket n \rrbracket \\ i \in S , \, |S| = d}} p_{\llbracket n \rrbracket \setminus S}^{2(n-d)},
    \end{equation*}
    where $p_{\llbracket n \rrbracket \setminus S}$ denotes the Plücker variable corresponding to the complement of~$S$. Now, since $A$ is uniform, this monomial does not vanish at $\Ker(A^\T \diag(u))$ if only $u_i$ is zero. Therefore, the preimage of $\Hu_{\mathcal{R}_L}$ via $\varphi$ does not contain $\V(u_i)$ for any $i$. To compute the preimage on $\TT$, we proceed as in the discussion following Definition \ref{def:hurdisc}. We substitute $p_{\llbracket n \rrbracket \setminus S} = \det(A_{\llbracket n \rrbracket \setminus S}^\perp) \prod_{i \in \llbracket n \rrbracket \setminus S} u_i^{-1}$. 
    By the assumption on $A$, all determinants $\det(A_{\llbracket n \rrbracket \setminus S}^\perp)$ are nonzero. Hence, we obtain a non-zero scalar multiple of
    \begin{equation*}
        \left( u_1 \cdots \hat{u_i} \cdots u_n \right)^{- 2(n-d) \binom{n-1}{n-d}} =  \left( u_1 \cdots \hat{u_i} \cdots u_n \right)^{- 2d \binom{n-1}{d}}.
    \end{equation*}
    Therefore, multiplying by $(u_1 \cdots u_n)^{2d \binom{n-1}{d}}$, we obtain $u_i^{2d \binom{n-1}{d}}$, as desired.
\end{proof}

\begin{prop}\label{prop:AgreeAsSets}
    Let both $L$ and $A$ be uniform. Then $\logdisc(X)$ and $\Hudisc(X)$ agree set-theoretically. In particular, $\logdisc(X)$ is a hypersurface.
\end{prop}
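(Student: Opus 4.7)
The plan is to establish both set-theoretic inclusions between $\logdisc(X)$ and $\Hudisc(X)$, working through the cartesian square of Lemma~\ref{lem:cartesiansec3} (applicable over $D$ since $A$ uniform implies all $A^i$ have full rank $d$).

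The easy direction $\logdisc(X) \subseteq \Hudisc(X)$ is immediate from Proposition~\ref{prop:containment}: it gives scheme-theoretic containment on $D$, which extends globally by density of $D$ in $\PP^n$.

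For the reverse, set $R_H \coloneqq \Ram(\pr_\Lambda|_{\mathcal{I}^\circ})$. Since the diagram of Lemma~\ref{lem:cartesiansec3} is cartesian, ramification loci pull back, giving $\Ram(f|_{\mathcal{L}_X^\circ|_D}) = (\gamma \times \varphi)^{-1}(R_H)$, and projecting to $D$ then yields the set-theoretic identity $f(\Ram(f|_D)) = \varphi^{-1}(\pr_\Lambda(R_H))$. By the discussion following Definition~\ref{def:hurdisc}, $\mathcal{Z}_1(\mathcal{R}_L) = \overline{\pr_\Lambda(R_H)}$, and by Chevalley's theorem $\pr_\Lambda(R_H)$ contains a Zariski-dense open $U \subseteq \mathcal{Z}_1(\mathcal{R}_L)$. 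Hence $\logdisc(X) \cap D = \overline{\varphi^{-1}(\pr_\Lambda(R_H))}$ contains $\varphi^{-1}(U)$, a nonempty open subset of $\Hudisc(X) \cap D$. Assuming irreducibility of $\Hudisc(X)$, the open $\varphi^{-1}(U)$ is dense in $\Hudisc(X)$, so its closure---forced to lie in the closed set $\logdisc(X)$---equals $\Hudisc(X)$. This gives $\Hudisc(X) \subseteq \logdisc(X)$, and combining with the first inclusion and Proposition~\ref{prop:HurwitzDegree} yields the claimed set equality together with the fact that $\logdisc(X)$ is a hypersurface.

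The main obstacle is the irreducibility of $\Hudisc(X)$ under the doubly uniform hypothesis. The rational map $\varphi$ is given in Plücker coordinates by the monomial substitution $p_I \mapsto \det(A^\perp_I) \prod_{i \in I} u_i^{-1}$, with all coefficients nonzero under double uniformity, so $\varphi|_\TT$ should be birational onto its image and the defining polynomial of $\Hudisc(X)$ ought to inherit irreducibility from the Hurwitz form $\Hu_{\mathcal{R}_L}$. Justifying this carefully---for instance by showing that the substitution cannot introduce a nontrivial factorization---is the key technical point I would need to address.
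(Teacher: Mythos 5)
Your reduction of the problem is sound as far as it goes: the inclusion $\logdisc(X)\subseteq\Hudisc(X)$ does follow from Proposition~\ref{prop:containment}, and the cartesian square of Lemma~\ref{lem:cartesiansec3} does give $f(\Ram(f|_D))=\varphi^{-1}(\pr_\Lambda(R_H))$ set-theoretically. But the step you defer --- irreducibility of $\Hudisc(X)$ --- is not a ``technical point to be addressed''; it is essentially the whole difficulty, and the route you sketch for it does not work. The map $\varphi$ is far from dominant ($\dim\PGr(n-d,\PP^n)=d(n-d+1)>n$ for $d\geq 2$), so $\Hudisc(X)$ is the restriction of the irreducible hypersurface $\mathcal{Z}_1(\mathcal{R}_L)$ to a proper subvariety of the Grassmannian, pulled back along a birational parametrization. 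Birationality of $\varphi|_\TT$ onto its image gives you nothing here: restricting an irreducible hypersurface to a subvariety routinely breaks irreducibility, and the paper's own Example~\ref{ex:M05first} exhibits exactly this, with $\Delta_{\rm Hu}=(s_{13}+s_{23}+s_{34})^2(s_{14}+s_{24}+s_{34})^2\cdot\Delta_{\rm log}$ acquiring extra factors under the Pl\"ucker substitution. (That arrangement is not doubly uniform, but it shows the mechanism you would have to rule out, and ruling it out is as hard as the proposition itself.) Moreover, even granting irreducibility of $\Hudisc(X)$, your argument still needs $\varphi^{-1}(U)\neq\emptyset$, i.e.\ that a \emph{generic} point of $\Hudisc(X)$ comes from a tangency of $\varphi(u)$ with $\mathcal{R}_L$ at a point of the torus part $\im(\gamma)$ rather than at the boundary $\mathcal{R}_L\setminus\TT$. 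Nothing in your write-up excludes the possibility that $\varphi(\Hudisc(X)\cap D)$ lands entirely in $\mathcal{Z}_1(\mathcal{R}_L)\setminus\pr_\Lambda(R_H)$.

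The paper's proof is organized precisely around this boundary issue and never needs irreducibility of $\Hudisc(X)$. It works on the incidence $Y\subseteq D\times\mathcal{R}_L$, observes that every component of $\Hudisc(X)$ is the image of a codimension-one component of the preimage $Z$ of the Hurwitz ramification scheme, and that any such component not contained in $\overline{\Ram(f)}$ must be one of the $n+2$ divisorial components of $Y\setminus\mathcal{L}_X^\circ$: the fibers $\pr_y^{-1}(e_i)$ over the torus-fixed points, or $Y\cap(D\times\mathcal{R}_{A^\T})$ over the hyperplane at infinity. The former are excluded because their images are the coordinate hyperplanes $\V(u_i)$, which the Newton polytope statement of Proposition~\ref{prop:HurwitzDegree} rules out as components of $\Hudisc(X)$; the latter is excluded by an explicit computation showing the Hessian does not vanish identically there, which is where double uniformity and the circuit argument from Theorem~\ref{thm:logRamificationReducedIrreducible} are actually used. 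To repair your proof you would need to supply substitutes for both of these exclusions; as written, the reverse inclusion is not established.
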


To prove Proposition \ref{prop:AgreeAsSets}, we need some more notation and a lemma. Let $\mathcal{I} \subseteq \PGr(n-d,\PP^n) \times \mathcal{R}_L$ be the closure of $\mathcal{I}^\circ$ from Lemma \ref{lem:cartesiansec3}. In fact, $\mathcal{I} = \set{(\Lambda,y) | y \in \Lambda } \subseteq \PGr(n-d,\PP^n) \times \mathcal{R}_L$. Furthermore, let $Y$ be the incidence
\begin{equation*}
    Y = \Set{(u,y) |  A^\transpose \diag(y) u = 0 } \subseteq D \times \mathcal{R}_L,
\end{equation*}
where $D = \PP^n \setminus \bigcup_{i<j} \V(u_i,u_j)$. These fit into the following diagram:
\begin{equation}\label{diagramProper}
\begin{tikzcd}
	Y & {\mathcal I} \\
	{D \times \mathcal R_L} & {\PGr(n-d,\PP^n)\times \mathcal R_L} \\
	D & {\PGr(n-d,\PP^n)}
	\arrow[from=1-1, to=1-2]
	\arrow[hook, from=1-1, to=2-1]
	\arrow["\lrcorner"{anchor=center, pos=0.125}, draw=none, from=1-1, to=2-2]
	\arrow[hook, from=1-2, to=2-2]
	\arrow["{\varphi \times \operatorname{id}}", from=2-1, to=2-2]
	\arrow["{\pr_u}", from=2-1, to=3-1]
	\arrow["\lrcorner"{anchor=center, pos=0.1}, draw=none, from=2-1, to=3-2]
	\arrow["{\pr_\Lambda}"', from=2-2, to=3-2]
	\arrow["\varphi", from=3-1, to=3-2]
\end{tikzcd}
\end{equation}
Each square in this diagram is cartesian.

\begin{lemma} \label{lem:torusinvpoints}
    Let $L$ be uniform and let $\varphi\colon D \rightarrow \mathbb{G}(n-d,\mathbb{P}^n)$ be as in Lemma \ref{lem:cartesiansec3}. 
    The  image of $\mathcal{I} \cap \left( \Im(\varphi) \times (\mathcal{R}_L \setminus \TT) \right)$ under the projection $\mathbb{G}(n-d,\PP^n) \times \mathcal{R}_L \rightarrow \mathcal{R}_L$ is the union of the $n+1$ torus-fixed points $e_i$ of $\PP^n$.
\end{lemma}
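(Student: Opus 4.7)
The plan is to establish both containments separately. The easy direction is that each coordinate point $e_i$ lies in the image: $e_i \in \mathcal{R}_L$ because all Proudfoot--Speyer generators $g_T$ (recalled in the proof of Proposition~\ref{prop:HurwitzDegree}) vanish at $e_i$ whenever $|T| = d+2 \geq 3$, and $e_i \in \varphi(u)$ for any $u \in D$ with $u_i = 0$, since $A^\T \diag(u) e_i = u_i A^\T e_i = 0$.

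For the reverse inclusion, fix $y \in \mathcal{R}_L \setminus \TT$ lying in some $\varphi(u)$ with $u \in D$, and let $S \coloneqq \{ i \in \llbracket n \rrbracket : y_i = 0\}$. The strategy is to derive two complementary constraints on $|S^c|$ whose intersection forces $|S^c| = 1$. The first, $|S^c| \leq d$, uses uniformity of $L$: assuming $|S^c| \geq d+1$, pick $i_0 \in S$ and $T' \subseteq S^c$ with $|T'| = d+1$, and set $T \coloneqq T' \cup \{i_0\}$. In $g_T(y) = \sum_{i \in T} \lambda_i \prod_{j \in T \setminus i} y_j$, every summand indexed by $i \in T'$ contains the factor $y_{i_0} = 0$, so only the term for $i = i_0$ survives, yielding $\lambda_{i_0} \prod_{j \in T'} y_j \neq 0$ (by uniformity of $L$ and since $T' \subseteq S^c$), contradicting $y \in \mathcal{R}_L$. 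The second constraint, $|S^c| \leq 1$ or $|S^c| \geq d+1$, uses uniformity of $A$: the relation $A^\T \diag(u) y = 0$ reads $\sum_{i \in S^c} u_i y_i\, a_i = 0$, with $a_i$ the $i$-th row of $A$. Setting $T \coloneqq \{ i \in S^c : u_i \neq 0\}$, either $T = \emptyset$ and hence $u$ vanishes on $S^c$, so $|S^c| \leq 1$ by $u \in D$; or $T \neq \emptyset$ gives a nontrivial dependence $\sum_{i \in T} (u_i y_i) a_i = 0$ with all coefficients nonzero, forcing $|T| \geq d+1$ by uniformity of $A$, hence $|S^c| \geq d+1$.

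Combining the two constraints leaves only $|S^c| = 1$, which together with $|S^c| \geq 1$ (since $y$ is a projective point) means $y = e_i$ for some $i$. The main difficulty is the first constraint: one must recognize that the Proudfoot--Speyer presentation of $\mathcal{R}_L$ pins down the admissible support patterns of a boundary point so severely that, after the kernel condition from $\varphi$ is added, only coordinate points survive.
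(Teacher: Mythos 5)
Your proof is correct and follows essentially the same route as the paper's: both arguments first show that a point of $\mathcal{R}_L \setminus \TT$ has at most $d$ nonzero coordinates (you derive this directly from the circuit generators $g_T$, where the paper simply cites the Proudfoot--Speyer description of $\mathcal{R}_L$), and then use the condition $A^\transpose \diag(u) y = 0$ together with $u \in D$ to cut the support down to a single coordinate, with your verification of the easy inclusion being a small bonus the paper leaves implicit. One remark: your second constraint invokes uniformity of $A$, which is not among the stated hypotheses of the lemma ($L$ uniform does not imply $A$ uniform) — but the paper's own proof makes exactly the same silent assumption, and it is harmless since the lemma is only applied in the doubly uniform setting of Section~\ref{sec:LogDiscHuDisc}.
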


\begin{proof}
    Since $L$ is uniform, it follows from the explicit description of the generators of the ideal of $\mathcal{R}_L$ (see \cite{ProudfootSpeyerBrokenCircuit}) that the set-difference $\mathcal{R}_L \setminus \TT$ is the union of all coordinate linear subspaces of $\PP^n$ of codimension $n+1-d$. If $A^\T \diag(u) y = 0$ and at least $n+1-d$ entries of $\diag(u) y$ are zero, then necessarily $\diag(u) y = 0$, so $y$ has at most one non-zero entry since $u \in D$ has at most one zero entry.
\end{proof}

Notice that $Y$ is smooth of dimension $n$ on the complement of the torus fixed points $e_i$ in $\mathcal{R}_L$ because over $\mathcal{R}_L \cap \TT$ it is an open subset of a projective bundle via $\pr_y\colon Y \rightarrow \mathcal{R}_L$. Moreover, $\pr_y^{-1}(e_i) = \V(u_i) \times \{e_i\}$ which has dimension $n-1$. Therefore, these fibers cannot be irreducible components since $Y$ is cut out by $d$ equations in $D \times \mathcal{R}_L$. We conclude that $Y$ is irreducible and, since $\mathcal{R}_L$ is Cohen--Macaulay \cite{ProudfootSpeyerBrokenCircuit} and $D$ is smooth, $Y$ is Cohen--Macaulay and thus reduced.

\begin{proof}[Proof of Proposition \ref{prop:AgreeAsSets}]

Let $\mathfrak{R} \subseteq \mathcal{I}$ be the scheme-theoretic closure of the Hurwitz ramification scheme $\Ram(\pr_\Lambda\colon \mathcal{I}^\circ \rightarrow \PGr(n-d,\PP^n))$. It is reduced and irreducible of codimension $1$ in $\mathcal{I}$ by Remark~\ref{rmk:HurwitzRamification}.
Let $Z$ be the preimage of $\mathfrak{R}$ in $Y$. Notice that $Z$ contains the closure of the ramification locus $\Ram(f\colon \mathcal{L}_X^\circ \rightarrow \PP^n)$. Its projection $\pr_u(Z)$ is $\nabla_{\rm Hu}$, which is a hypersurface by Proposition \ref{prop:HurwitzDegree}. Hence, if $Z$ has a component of codimension $>1$, then its image in $D$ under $\pr_u$ is contained in the image of some codimension $1$ component of $Z$.
Hence, it suffices to prove that $Z$ does not contain any codimension $1$ component of the complement $Y \setminus \mathcal{L}_X^\circ$. The latter is the union of $n+2$ codimension $1$ components: The $n+1$ fibers $\pr_y^{-1}(e_i)$ as well as the component $Y \cap (D \times \mathcal{R}_{A^\T})$. Here, $\mathcal{R}_{A^\T}$ is the $(d-1)$-dimensional reciprocal linear space corresponding to the matrix $A^\T$.

Now, the image of $\pr_y^{-1}(e_i)$ in $D$ is $\V(u_i)$ by the proof of Lemma \ref{lem:torusinvpoints}. On the other hand, by Proposition~\ref{prop:HurwitzDegree}, $\V(u_i)$ is not a component of $\Hudisc$.

For $Y \cap (D \times \mathcal{R}_{A^\T})$, we first restrict to the open $Y' \coloneqq Y \cap (D \times (\mathcal{R}_L \cap \TT))$ which is smooth. Likewise, we can restrict $\mathcal{I}$ to the smooth open $$\mathcal{I}' \coloneqq \mathcal{I} \cap \left( \PGr(n-d,\PP^n) \times (\mathcal{R}_L \cap \TT) \right).$$ Then $\mathfrak{R} \cap \mathcal{I}'$ is the ramification scheme of $\mathcal{I}' \rightarrow \PGr(n-d,\PP^n)$, and its preimage $Z \cap Y'$ is the ramification scheme of $\pr_u\colon Y' \rightarrow D$.

Hence, we only need to prove that $Y' \rightarrow D$ is not ramified everywhere along $Y' \cap ( D \times \mathcal{R}_{A^\T})$. The latter is an irreducible subvariety of $Y'$ of codimension~$1$ because it is an open of a projective bundle via $\pr_y$.
We now proceed in a similar fashion as in the proof of Theorem~\ref{thm:logRamificationReducedIrreducible}: First identify $\mathcal{R}_L \cap \TT \cong \PP^d \setminus \overline{\mathcal{A}}$ where $\overline{\mathcal{A}}$ is the closure of $\mathcal{A}$ in $\PP^d$. Under this isomorphism, $\mathcal{R}_{A^\T} \cap \TT$ corresponds to the hyperplane at infinity $\{x_0 = 0\}$. In the affine open chart where, for instance, $x_1 = 1$, the ramification scheme of $Y' \rightarrow D$ is defined by the determinant of the Jacobian matrix of $A^\T \diag(u) (\ell_0(x)^{-1}, \ldots, \ell_n(x)^{-1})^\T = 0$ but now with respect to the variables $x_0, x_2, \ldots, x_d$. Let $A'$ be the $(n+1) \times d$ submatrix of $L^\T = [b \mid A]$ obtained by deleting the column corresponding to $x_1$. After the substitution $v_i = u_i/\ell_i(x)$, the ramification locus is defined by
\begin{equation*}
    A^\T v = 0, \qquad \det(A^\T \diag(v) \diag(\ell_0(x), \ldots, \ell_n(x))^{-1} A') = 0
\end{equation*}
in the chosen chart. 
We may assume that the first $d \times d$ submatrix of $A$ is the identity matrix. 
The substituted Hessian $\Tilde{h}$ is defined as in the proof of Theorem~\ref{thm:logRamificationReducedIrreducible}. It only uses the variables $v_d, v_{d+1}, \ldots, v_n$. 
As in the proof of Theorem~\ref{thm:logRamificationReducedIrreducible}, we will write $N(v_d, v_{d+1}, \ldots, v_n; x_0, x_1, \ldots, x_d)$ for the ``naive'' homogenization of $\tilde{h}$.
The coefficient of $v_{d}^d$ in $N$ is
\begin{equation*}
    (-1)^{d-1} \cdot \Big( \prod_{j = 0}^{d-1}a_{d,j}  \Big) \cdot \Big( \prod_{k= d+1}^n \ell_k^{\rm h}(x) \Big) \cdot  \Big( \sum_{i = 0}^{d} \det(A_{\llbracket d \rrbracket \setminus i}) \det(A'_{\llbracket d \rrbracket \setminus i}) \frac{\ell^{\rm h}_i(x)}{a_{d,i}} \Big) ,
\end{equation*}
where $a_{d,i}$ denotes the entry of $A$ for $0 \leq i \leq d-1$ and we set $a_{d,d} \coloneqq -1$. The sum on the right is the only factor that can possibly be zero for $x \in \PP^d \setminus \overline{\mathcal{A}}$. It is enough to prove that this sum does not vanish identically after setting $x_0 = 0$. 

By Equation \eqref{eq:hessianvanishes}, the sum vanishes identically at $x_0 = 0$ if $A'$ is replaced by $A$. 
Note that $\ell^{\rm h}_0(0,x_1,\ldots,x_d), \ldots, \ell^{\rm h}_{d+1}(0,x_1,\ldots,x_d)$ form a \emph{circuit} of the matrix $A$. Hence, the coefficients of a linear relation among them are unique up to scaling. Therefore, if also the above sum vanishes identically after setting $x_0 = 0$, then there is some $\lambda \in \CC$ such that $\det(A'_{\llbracket d \rrbracket \setminus i}) = \lambda  \det(A_{ \llbracket d \rrbracket \setminus i})$ for all $i = 0, \ldots, d$. But this implies that the inverse of the first $(d+1) \times (d+1)$ submatrix of $L^\T$ has two linearly dependent columns, which is impossible.
\end{proof}

\vspace{-1cm}

\section{The logarithmic discriminant of \texorpdfstring{${\mathcal M}_{0,m}$}{M\_0,m}} \label{sec:M_0m}

This section focuses on the case where the complement of the arrangement ${\mathcal A}$ models the moduli space ${\mathcal M}_{0,m}$ of genus zero curves with $m$ distinct marked points. A point in ${\mathcal M}_{0,m}$ is represented by a $2 \times m$ matrix of the form \eqref{eq:matrixM0m}
whose $2 \times 2$-minors are nonzero. The columns represent homogeneous coordinates for the marked points in $\mathbb{P}^1$. Modulo automorphisms of $\mathbb{P}^1$, we may assume that the first points are $0, 1$, and the last point is $\infty$. The remaining points are represented by the coordinates $x_i$, and nonzero minors implies that our $m$ points are distinct. This represents ${\mathcal M}_{0,m}$ as $\mathbb{C}^{m-3} \setminus {\mathcal A}$, where ${\mathcal A}$ is the arrangement of hyperplanes defined by the non-constant minors of our $2 \times m$-matrix. The parameters $d,n$ are $d = m-3$ and $n =\frac{m(m-3)}{2}$. Example~\ref{ex:M05-intro} illustrates this for $m = 5$. 

These arrangements are relevant for scattering amplitudes in bi-adjoint scalar $\phi^3$-theories, as studied by Cachazo--He--Yuan \cite{cachazo2014scattering}. It is customary to denote the exponents $u$ by $s_{ij}$ in this context, in such a way that $s_{ij}$ is the exponent standing with the determinant of columns $i$ and $j$ in \eqref{eq:matrixM0m}. In physics, each column of \eqref{eq:matrixM0m} corresponds to a particle with momentum $p_i$ and $s_{ij}$ are \emph{Mandelstam} \emph{invariants}. 
For more background, we refer to \cite[Section 3.2]{lam2024moduli}. 

As we have already observed for $m = 5$ in Example~\ref{ex:M05-intro-2}, the arrangement ${\mathcal A}$ associated to ${\mathcal M}_{0,m}$ is not a generic arrangement of $n+1$ hyperplanes in $\mathbb{C}^d$. This reflects, for instance, in the low degree of its logarithmic discriminant. The scattering equations \eqref{eq:criteqs} also have a small number of solutions compared to generic arrangements: the signed Euler characteristic $(-1)^d \cdot \chi(X)$ of the complement $X = \mathbb{C}^d \setminus {\mathcal A}$ of a generic arrangement of $n+1$ hyperplanes in $\mathbb{C}^d$ equals $\binom{n}{d}$. In contrast, it is well-known that $(-1)^{m-3} \cdot \chi({\mathcal M}_{0,m}) = ( m-3)!$ \cite[Proposition 1]{sturmfels2021likelihood}. For $m = 10$, these numbers are $5379616$ and $5040$, respectively.

The following conjecture holds for $m = 5$ by Example~\ref{ex:M05first} below. It is supported by a numerical computation for $m =6, 7,8$. The code is available at \cite{mathrepo}. 

\begin{conjecture} \label{conj:1}
    For any $m \geq 5$, the logarithmic discriminant $\nabla_{\rm log}({\mathcal M}_{0,m})$ is an irreducible and reduced hypersurface in $ \mathbb{P}^{\frac{m(m-3)}{2}-1}$. For $m = 5, 6, 7,8$ its degree equals $4, 30, 208$ and $1540$, respectively. 
\end{conjecture}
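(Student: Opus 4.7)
The plan is to combine the Hurwitz-form framework of Section~\ref{sec:hurwitz} with intersection theory on the Deligne--Mumford compactification $\overline{\mathcal{M}}_{0,m}$. The arrangement associated to $\mathcal{M}_{0,m}$ is highly non-uniform---already for $m=5$, many $2\times 2$ minors of its $A$-matrix vanish---so neither the sufficient condition of Theorem~\ref{thm:logRamificationReducedIrreducible} nor the degree formula of Theorem~\ref{thm:main-intro} applies. Nevertheless, the projection $f\colon \mathcal{L}_X^\circ \to \PP^n$ has degree $(m-3)!$, and the Hessian~\eqref{eq:cauchy-binet} still cuts out $\Ram(f)$ scheme-theoretically, so these remain the right objects to study.

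For irreducibility I would argue by $S_m$-equivariant induction on $m$. The action of $S_m$ on marked points lifts to compatible actions on $\mathcal{L}_X^\circ$, on $\Ram(f)$, and on $\PP^n$ (permuting the Mandelstam coordinates $s_{ij}$), so any decomposition of $\Ram(f)$ into irreducible components is $S_m$-invariant. Extending $\mathcal{L}_X^\circ$ to a smooth compactification over $\overline{\mathcal{M}}_{0,m}$ and restricting to a boundary divisor $\delta_{\{i,j\}} \cong \overline{\mathcal{M}}_{0,m-1}$ reduces the question to a lower-$m$ problem, with base case $m=5$ handled by Example~\ref{ex:M05first}. Inductive irreducibility on a single such restriction, together with transitivity of the $S_m$-action on the set of divisors $\delta_{\{i,j\}}$, would then force $\Ram(f)$ to be irreducible. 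Reducedness I would handle by exhibiting a single smooth point at which the Hessian vanishes transversally---a natural candidate is a general point of $\delta_{\{i,j\}}$, where two marked points coalesce, the scattering equations factor, and the Hessian is seen to acquire only a simple zero. The hypersurface property of $\nabla_{\log}(\mathcal{M}_{0,m})$---ruling out higher-codimension components as in Example~\ref{example:reducibleAndCodim2}---reduces to showing that $\Crit_X(u)$ remains zero-dimensional for every $u \in \PP^n$, which is controlled by the properness of $\overline{\mathcal{M}}_{0,m}$.

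For the degree, the plan is to write $\deg \Delta_{\rm log}$ as the push-forward to $\PP^n$ of the divisor class of $\det H_x(\mathcal{L}_u)$ from a smooth compactification $\overline{\mathcal{L}}$ of $\mathcal{L}_X^\circ$ built over $\overline{\mathcal{M}}_{0,m}$. The Hessian divisor class decomposes in the tautological basis of $A^\bullet(\overline{\mathcal{M}}_{0,m})$ in terms of $\psi$-classes and boundary classes $\delta_I$, and its push-forward along $\overline{\mathcal{L}}\to \PP^n$ yields a specific intersection number on $\overline{\mathcal{M}}_{0,m}$ which should match the predicted values $4, 30, 208, 1540$ for $m=5,6,7,8$.

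The main obstacle is this last intersection-theoretic calculation. Each boundary divisor $\delta_I$ contributes to the Hessian class with a multiplicity determined by how critical points escape into the corresponding stratum, and unwinding these contributions requires a careful local analysis at every codimension-one stratum of $\overline{\mathcal{M}}_{0,m}$. The resulting class is unlikely to be a single monomial in the $\psi_i$, so the work lies in identifying the combinatorial pattern of the boundary contributions. Computing the push-forward explicitly for small $m$ via the \texttt{Macaulay2} code at~\cite{mathrepo} would provide useful numerical guidance for guessing, and then proving, a closed-form formula valid for all $m$.
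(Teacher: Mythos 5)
First, note that the statement you are addressing is presented in the paper as a \emph{conjecture}: the authors give no proof, only the explicit $m=5$ computation of Example~\ref{ex:M05first} and numerical evidence for $m=6,7,8$. Your text does not close that gap either---it is a research program whose decisive steps are deferred, and some of the steps you do sketch are not sound as stated. Concretely: (1) The inductive irreducibility argument does not work in the form given. Even granting a smooth compactification of $\mathcal{L}_X^\circ$ over $\overline{\mathcal{M}}_{0,m}$, irreducibility of the restriction of $\Ram(f)$ to a boundary divisor $\delta_{\{i,j\}}$ does not imply irreducibility of $\Ram(f)$: a reducible variety can meet a divisor in an irreducible set (or not at all), and $S_m$-invariance of the component decomposition is compatible with several components each meeting every $\delta_{\{i,j\}}$. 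Moreover, the paper's own degeneration analysis in Section~\ref{sec:M_0m} (the facet $w=(0,0,1,1,1)$) shows that critical points can escape to the boundary, so the behaviour of $\Ram(f)$ near $\delta_I$ is precisely the delicate point, not a free reduction step. (2) The claim that the hypersurface property ``is controlled by the properness of $\overline{\mathcal{M}}_{0,m}$'' is unjustified: properness says nothing about whether $\Crit_X(u)$ can be positive-dimensional for special $u$ in the open part. Example~\ref{example:reducibleAndCodim2} shows that this genuinely happens for other arrangements, and ruling it out for $\mathcal{M}_{0,m}$ is part of what the conjecture asserts. (3) The degree computation---the only place the specific values $4,30,208,1540$ could come from---is left entirely open; you acknowledge that the boundary contributions to the Hessian class are unknown and that the closed formula is still to be guessed and then proven.

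That said, the overall strategy (an $S_m$-equivariant analysis of $\Ram(f)$ over a compactification of $\mathcal{M}_{0,m}$, with degrees extracted as intersection numbers in $\psi$- and boundary classes) is a sensible and natural line of attack, consistent with the degeneration techniques the paper itself uses in Section~\ref{sec:M_0m}. But as written it is a plan with the hardest parts missing, not a proof, and the conjecture remains open.
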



\begin{exa} \label{ex:M05first} Using Mandelstam variables, the polynomial $\Delta_{\log}$ in \eqref{eq:DeltaM05} reads 
\begin{equation*}  \Delta_{\rm log} \, = \, (s_{13}s_{24} + s_{13}s_{34} + s_{14}s_{34} + s_{14}s_{23} + s_{23}s_{34} + s_{24}s_{34} + s_{34}^2)^2 - 4 s_{13}s_{14}s_{23}s_{24}.\end{equation*}
Explicitly, the substitution is $u = (u_0, u_1, u_2, u_3,u_4) = (s_{13},s_{14},s_{23},s_{24},s_{34})$. This polynomial captures when the following equations have a singular~solution: 
\begin{equation} \label{eq:scatteringM05}
    \frac{s_{13}}{x_1}  + \frac{s_{23}}{x_1-1} - \frac{s_{34}}{x_2-x_1} \, = \, \frac{s_{14}}{x_2}  + \frac{s_{24}}{x_2-1} + \frac{s_{34}}{x_2-x_1}  \, = \, 0.
\end{equation}
It was pointed out to us by Sebastian Mizera that the discriminant $\Delta_{\rm log}$ has a nice symmetric determinantal expression: 
\[ \Delta_{\rm log} \, = \, \det \begin{pmatrix}
    0 & -s_+ & s_{13} & s_{14}\\ - s_+ & 0 & s_{23} & s_{24} \\ 
    s_{13} & s_{23} & 0 & s_{34} \\ s_{14} & s_{24} & s_{34} & 0
\end{pmatrix},\]
where $s_+ = s_{13} + s_{14} + s_{23} + s_{24} + s_{34}$. In physics, this is a principal minor of the $5 \times 5$ \emph{Gram matrix} $G = (s_{ij})_{1 \leq i < j \leq 5}$ associated to our particles. The entries satisfy the identities $s_{ii} = 0, s_{ij} = s_{ji}$ and $\sum_{j=1}^5s_{ij} = 0, i = 1, \ldots, 5$. Using these linear relations, all $s_{ij}$ are expressed in terms of the entries of our five exponents, e.g., $s_{12} = - s_+$. This way all five principal $4 \times 4$-minors of $G$ evaluate to $\Delta_{\rm log}$.

The discriminant is obtained algebraically as follows. 
First, one isolates $x_2$ in the first equation of \eqref{eq:scatteringM05} and substitutes the resulting expression $x_2 = f(x_1)$ in the second equation. This gives a rational function with numerator
\[ ( s_{13}(x_1-1) + s_{23}x_1 ) (c(u) + b(u) \, x_1 \, + a(u) \, x_1^2) \, = \, 0,  \]
where $a, b$ and $c$ are homogeneous polynomials of degree 2 in $u$. 
 The factor $( s_{13}(x_1-1) + s_{23}x_1 )$ is the denominator in the expression $f(x_1)$ found for $x_2$, so it cannot vanish at a critical point of ${\mathcal L}_u$. 
The quadratic discriminant $b(u)^2 - 4\, a(u)c(u)$ equals $\Delta_{\log}$. For $u \in \nabla_{\rm log}$, the degenerate critical point is 
\[ x_1 \, = \, \frac{-b(u)}{2\, a(u)}, \quad x_2 \, = \, \frac{-b(u) \, (2\, a(u) \, (s_{13}+s_{34})+b(u) \, (s_{13}+s_{23}+s_{34}))}{2\, a(u) \, (2\, a(u)s_{13} + b(u)(s_{13}+s_{23}))}. \]
The Hurwitz form of the reciprocal linear space ${\mathcal R}_L$, where $L \in \mathbb{Z}^{3 \times 5}$ contains the coefficients of the $\ell_i$ in \eqref{eq:ellM05}, is a hypersurface in $\mathbb{G}(2,\mathbb{P}^4)$. It is represented modulo the Pl\"ucker ideal by a homogeneous polynomial of degree eight in $p_{012}, p_{013}, \ldots, p_{234}$. This polynomial has 2285 terms, and can be downloaded at \cite{mathrepo}. The Hurwitz discriminant from Section \ref{sec:hurwitz} is obtained by substituting $p_{ijk} = \det(A^\perp_{ijk}) \cdot (u_iu_ju_k)^{-1}$, see the discussion below Definition \ref{def:hurdisc}. Performing this substitution we obtain the reducible polynomial
\[ \Delta_{\rm Hu} \, = \, (s_{13} + s_{23} + s_{34})^2 
\cdot (s_{14} + s_{24} + s_{34})^2 \cdot \Delta_{\rm log}.  \qedhere \]
\end{exa}
Such an explicit analysis is out of reach for $m = 6$: $\Delta_{\rm log}({\mathcal M}_{0,6})$ is (conjectured to be) a polynomial of degree $30$ in nine variables. We have not been able to compute all its coefficients. However, we computed partial information which showcases an interesting recursive structure. 

Our strategy for obtaining such \emph{partial information} is based on \emph{degenerations} of the logarithmic discriminant variety $\nabla_{\rm log}$, which we assume to have codimension one. Concretely, for a weight vector $w \in \mathbb{R}^{n+1}_{\geq 0}$, let 
\[ \Delta_{\rm log}^{w}(u) \, = \, \lim_{\varepsilon \rightarrow 0} \varepsilon^{- \min_{\alpha \in P} w \cdot \alpha } \cdot \Delta_{\rm log}(\varepsilon^w \cdot u), \quad \text{where } \varepsilon^w \cdot u \, = \, (\varepsilon^{w_0}u_0, \ldots, \varepsilon^{w_n}u_n).\]
This is called the \emph{$w$-initial form} of $\Delta_{\rm log}$. Geometrically, for each nonzero value of $\varepsilon$ we consider the hypersurface $\nabla_{\rm log}(\varepsilon) = \set{u \in \mathbb{P}^{n} | \Delta_{\rm log}(\varepsilon^w \cdot u) = 0 }$. As $\varepsilon \rightarrow 0$, this converges to $\nabla_{\rm log}^w = \set{ u\in\mathbb{P}^n | \Delta_{\rm log}^w(u) = 0 }$. 
Next, we interpret $\Delta_{\rm log}^w$ in terms of the \emph{Newton polytope} of $\Delta_{\rm log}$. This will justify the hope that $\Delta_{\rm log}^w$ is easier to compute. 

Recall that the Newton polytope of $\Delta_{\rm log}$ is the convex polytope $P \subset \mathbb{R}^{n+1}$ obtained as the convex hull of all exponents appearing in the polynomial $\Delta_{\rm log}$. By homogeneity, it has dimension at most $n$. A \emph{face} of $P$ is a subset of the form $P^w = \set{ \alpha' \in P | w \cdot \alpha' = \min_{\alpha \in P} w \cdot \alpha }$ for some $w \in \mathbb{R}_{\geq 0}^{n+1}$. Facets are faces of dimension $\dim P - 1$ and  vertices are zero-dimensional faces.


\begin{exa}
    The Newton polytope $P$ of $\Delta_{\rm log}$ for ${\mathcal M}_{0,5}$ is four-dimensional with $f$-vector $(7,17,18,8)$. Its eight facets are $P^w$ for $w$ in the list 
\[ \begin{matrix} 
(1,0,1,0,1),& (0,1,0,1,1),&(0,0,1,1,1),&(1,1,0,0,1),\\
(1,0,0,0,0),&(0,1,0,0,0),&(0,0,1,0,0),&(0,0,0,1,0).
\end{matrix} \qedhere\]
\end{exa}

The relation between initial forms of $\Delta_{\rm log}$ and faces of $P$ is as follows: $\Delta_{\rm log}^w(u)$ is the sum of all terms in $\Delta_{\rm log}$ whose exponents lie on the face $P^w$. In particular, $\Delta_{\rm log}^w$ is a nonzero polynomial whose degree is that of $\Delta_{\rm log}$.  

The challenge is to compute $\Delta_{\rm log}^w$ without computing $\Delta_{\rm log}$ first. This can be done by degenerating the ramification locus, as illustrated in the next examples.

\begin{exa} \label{ex:softlimitM05}
For ${\mathcal M}_{0,5}$, the initial form of the facet normal $w=(0,1,0,1,1)$ is
\[  \Delta_{\rm log}^{(0,1,0,1,1)} \, = \, (s_{13}s_{24} + s_{13}s_{34} + s_{14}s_{34} +  s_{23}s_{34})^2 - 4 s_{13}s_{14}s_{23}s_{24}.\]
We will obtain this from the likelihood equations $\nabla {\mathcal L}_{\varepsilon^{w} \cdot u} = 0$: 
\begin{equation*} 
    \frac{s_{13}}{x_1}  + \frac{s_{23}}{x_1-1} - \frac{\varepsilon s_{34}}{x_2-x_1} \, = \, \frac{\varepsilon s_{14}}{x_2}  + \frac{\varepsilon s_{24}}{x_2-1} + \frac{\varepsilon s_{34}}{x_2-x_1}  \, = \, 0.
\end{equation*}
We study these equations in the limit for $\varepsilon \rightarrow 0$. This choice of $w$ leads to the \emph{soft limit} for the particle labeled $4$ in the physics literature \cite{cachazo2020singular}. This is because we replace $s_{ij}$ by $\varepsilon \,s_{ij}$ each time $4 \in ij$. As $\varepsilon$ approaches $0$, the two complex critical points converge in ${\mathcal M}_{0,5}$ \cite[Section 2]{cachazo2020singular}. Their limits are the two solutions~of
\begin{equation} \label{eq:scatteringM05degen}
    \frac{s_{13}}{x_1}  + \frac{s_{23}}{x_1-1}  \, = \, \frac{s_{14}}{x_2}  + \frac{s_{24}}{x_2-1} + \frac{s_{34}}{x_2-x_1}  \, = \, 0.
\end{equation}
The condition for the solutions of \eqref{eq:scatteringM05degen} to coincide  is precisely $\Delta_{\rm log}^w = 0$. To check this, one eliminates $x_1$ and computes a quadratic discriminant as in Example \ref{ex:M05first}. Thus, $\nabla_{\rm log}^{w}$ is the projection of the ramification locus in the soft limit.
\end{exa}

\begin{exa}
    In Example \ref{ex:softlimitM05}, the critical points converge in ${\mathcal M}_{0,5}$ in the soft limit. The situation for the facet $w=(0,0,1,1,1)$ is different. 
    We consider
\begin{equation*} 
    \frac{s_{13}}{x_1}  + \frac{\varepsilon s_{23}}{x_1-1} - \frac{\varepsilon s_{34}}{x_2-x_1} \, = \, \frac{s_{14}}{x_2}  + \frac{\varepsilon s_{24}}{x_2-1} + \frac{\varepsilon s_{34}}{x_2-x_1}  \, = \, 0.
\end{equation*}
This time, when $\varepsilon \rightarrow 0$, the critical points move to the boundary of ${\mathcal M}_{0,5}$. With the Ansatz $x_1 = 1 + \varepsilon \bar{x}_1 + O(\varepsilon^2)$ and $x_2 = 1 + \varepsilon \bar{x}_2 + O(\varepsilon^2)$, we find 
\begin{equation*} 
    s_{13} + \frac{s_{23}}{\bar{x}_1} - \frac{s_{34}}{\bar{x}_2-\bar{x}_1} \, = \, s_{14}  + \frac{s_{24}}{\bar{x}_2} + \frac{ s_{34}}{\bar{x}_2-\bar{x}_1}  \, = \, 0.
\end{equation*}
We compute that the solutions for $(\bar{x}_1,\bar{x}_2)$ coincide if and only if $\Delta_{\rm log}^w = 0$. This happens when the critical points of ${\mathcal L}_{\varepsilon^{w} \cdot u}$ approach the point $(1,1)$ in the same tangent direction. Thus, the initial form $\Delta_{\rm log}^w$ detects when these critical points collide for $\varepsilon \rightarrow 0$ in the Deligne--Mumford compactification $\overline{{\mathcal M}}_{0,5}$. 
\end{exa}

We suggest a generalization of the observation in Example \ref{ex:softlimitM05} concerning the initial forms of $\Delta_{\rm log}({\mathcal M}_{0,m})$ corresponding to soft limits. For $3 \leq k \leq m-1$, let $w_k \in\mathbb{R}^{n+1}_+$ be the weight vector corresponding to the soft limit for particle $k$. That is, $w$ assigns weight $1$ to $s_{ij}$ if $k \in ij$, and weight $0$ otherwise. Example \ref{ex:softlimitM05} uses $k=4, m =5$. Let ${\mathcal M}_{0,m-1}^{(k)}$ be the complement of the hyperplane arrangement in $\mathbb{C}^{m-4}$ given by all non-constant minors of the submatrix of \eqref{eq:matrixM0m} obtained by deleting the $k$-th column. If its logarithmic discriminant is a hypersurface, then $\Delta_{\rm log}({\mathcal M}_{0,m-1}^{(k)})$ is a polynomial in the Mandelstam variables $s_{ij}$ with $k \notin ij$. 

    Based on numerical computations and nearly complete theoretical insights, we believe that $\Delta_{\rm log}({\mathcal M}_{0,m-1}^{(k)})^{m-3}$ divides the soft limit initial form $\Delta_{\rm log}^{w_k}({\mathcal M}_{0,m})$. We leave a proof of this claim for future work, and end with an example.
    


\begin{exa} \label{ex:softlimitM06}
    The scattering equations for ${\mathcal M}_{0,6}$ with $u \rightarrow \varepsilon^{w_5}\cdot u$ are
    \begin{align*}
        f_1  \, = \, \frac{s_{13}}{x_1} + \frac{s_{23}}{x_1-1} - \frac{s_{34}}{x_2-x_1} - \frac{\varepsilon \,s_{35}}{x_3-x_1} &\, =\,0\\
        f_2  \, = \, \frac{s_{14}}{x_2} + \frac{s_{24}}{x_2-1} + \frac{s_{34}}{x_2-x_1} - \frac{\varepsilon \,s_{45}}{x_3-x_2} &\, =\, 0,\\
        f_3  \, =\,\frac{\varepsilon \, s_{15}}{x_3} + \frac{\varepsilon \, s_{25}}{x_3-1} +\frac{\varepsilon \, s_{35}}{x_3-x_1} + \frac{\varepsilon \, s_{45}}{x_3-x_2} & \, =\,0.
    \end{align*}
    The discriminant of $f_1=f_2=0$ with $\varepsilon = 0$ is the logarithmic discriminant $\Delta_{\rm log}({\mathcal M}_{0,5}^{(5)})$ computed in Example \ref{ex:M05first}. It appears as a factor $\Delta_{\rm log}({\mathcal M}_{0,5}^{(5)})^3$ in $\Delta^{w_5}_{\rm log}({\mathcal M}_{0,6})$. 
    The degree of $\Delta_{\rm log}({\mathcal M}_{0,6})$ is (conjecturally) 30. The missing factor of degree 18 captures when $(x_1,x_2)$ leads to a double root in $x_3$ for $f_3=0$. To compute this, first isolate $x_2$ in $f_1 =0$ to express $x_2$ in terms of $x_1$ in $f_2$ and $f_3$. After that, up to some spurious factors, our degree 18 factor is the resultant ${\rm Res}_{x_1}(g_2,{\rm Disc}_{x_3}(g_3))$, with $g_i$ the numerator of $f_i$. The face $P^{w_5}$ of the Newton polytope $P$ of $\Delta_{\rm log}({\mathcal M}_{0,6})$ is a facet. It 
    contains $600413$ lattice points and has $f$-vector $(237,907,1432,1195,564,149,20)$ .
\end{exa}
\begin{footnotesize}
\noindent {\bf Funding statement}:
This project started at a workshop held at MPI MiS Leipzig, supported by the European Union (ERC, UNIVERSE PLUS, 101118787).
Views and opinions expressed are however those of the authors only and do not necessarily reflect those of the European Union or the European Research Council Executive Agency. Neither the European Union nor the granting authority can be held responsible for them.
\end{footnotesize}

\vspace{-1.3cm}

\bibliography{references}

\vspace{-0.5cm}
\end{document}